%BeginFileInfo
%%Publisher=ARXIV
%%Project=BEJ
%%Manuscript=BEJ639
%EndFileInfo
%
% The Bernoulli Society / IMS
% Journal "Bernoulli"
%
%options: secthm,seceqn,secfloat,nameyear

\input ./style/arxiv-general.cfg
\documentclass[seceqn,MSNbibl,citesort,number,dvips]{arxbj}
\makeatletter
   \@ifpackageloaded{graphicx}{}{\usepackage{graphicx}}
\makeatother
\usepackage{mathrsfs}

% settings
%

% article settings
\aid{0}
\volume{21}
\issue{4}
\pubyear{2015}
\firstpage{2157}
\lastpage{2189}
\doi{10.3150/14-BEJ639} % kopijuoti is 'New paper accepted'
\docsubty{FLA}

\makeatletter

\newcommand{\rrvert}{\vert}
\newcommand{\rrVert}{\Vert}
\newcommand{\llvert}{\vert}
\newcommand{\llVert}{\Vert}

\newtheorem{theorem}{Theorem}[section]
\newtheorem{lemma}[theorem]{Lemma}
\newtheorem{corollary}[theorem]{Corollary}

\newproclaim{definition}{Definition}[section]
\newremark{remark}[definition]{Remark}

\newcommand{\eqref}[1]{(\ref{#1})}

\renewcommand{\phi}{\varphi}
\newcommand{\eps}{\varepsilon}
\renewcommand{\epsilon}{\varepsilon}

\newcommand{\D}{\Delta}
\newcommand{\eq}{\eqref}

\newcommand{\bigo}{\mathrm{O}}

\newcommand{\toinf}{\to\infty}

\def\t#1{\tilde{#1}}

\def\cK{\mathcal{K}}
\def\cF{\mathcal{F}}
\def\cV{\mathcal{V}}

\newcommand{\mathbbm}{\mathbb}
\newcommand{\IE}{\mathbbm{E}}
\newcommand{\IP}{\mathbbm{P}}
\newcommand{\IR}{\mathbbm{R}}
\newcommand{\II}{\mathbbm{I}}
\def\Id{\mathbbm{I}}

\def\CI{\operatorname{CS}}
\def\endo{\operatorname{inj}}
\def\dist{\operatorname{dist}}
\newcommand{\Des}{\operatorname{Des}}
\newcommand{\Inv}{\operatorname{Inv}}
\newcommand{\Var}{\operatorname{Var}}
\newcommand{\Cov}{\operatorname{Cov}}
\newcommand{\Tr}{\operatorname{Tr}}
\newcommand{\Cor}{\operatorname{Cor}}
\newcommand{\Uniform}{\operatorname{Uniform}}

\newcommand{\law}{\mathscr{L}}

\def\E{\IE}
\def\P{{\IP}}
\def\I{\mathrm{I}}

\newcommand{\mrr}{\IR^d}

\renewcommand{\^}{\hat}

\newcommand{\fracd}[2]{({#1}/{#2})}

\newcommand{\fracb}[2]{(#1)/{#2}}
\makeatother

\begin{document}
\begin{frontmatter}

\title{Rates of convergence for multivariate
normal approximation with applications to dense graphs and
doubly indexed permutation statistics}
\runtitle{Rates of convergence for multivariate
normal approximation}

\begin{aug}
%%%% inicialai - be tarpu
\author[A]{\inits{X.}\fnms{Xiao}~\snm{Fang}\thanksref{A}\ead[label=e1]{fangxiao@stanford.edu}} \and
\author[B]{\inits{A.}\fnms{Adrian}~\snm{R\"ollin}\corref{}\thanksref{B}\ead[label=e2]{adrian.roellin@nus.edu.sg}}
%\author{\inits{}\fnms{}~\snm{}}
%%\runauthor{} %% auto
%\dedicated{}
\address[A]{Department of Statistics, Stanford University, Stanford, CA
94305-4065, USA.\\ \printead{e1}}
\address[B]{Department of Statistics and Applied Probablity, National
University of Singapore, 6 Science Drive 2, Singapore 117546. \printead{e2}}
\end{aug}

% HISTORY:
\received{\smonth{6} \syear{2012}}
\revised{\smonth{4} \syear{2014}}

% ABSTRACT
%
\begin{abstract}
We provide a new general theorem for multivariate normal
approximation on convex sets. The theorem is formulated in terms of a
multivariate extension of Stein couplings. We apply the results to a homogeneity
test in dense random graphs and to prove multivariate asymptotic normality
for certain doubly indexed permutation statistics.
\end{abstract}

% KEYWORDS
% visi is mazosios raides ir pagal abecele
%
\begin{keyword}
\kwd{dense graph limits}
\kwd{multivariate normal approximation}
\kwd{non-smooth metrics}
\kwd{permutation statistics}
\kwd{random graphs}
\kwd{Stein's method}
\end{keyword}
\end{frontmatter}

%s1 #&#
\section{Introduction} \label{sec1}

Let $W$ and $Z$ be $d$-dimensional random vectors, $d\geq1$, where $Z$
has standard $d$-dimensional Gaussian distribution. We are concerned with
bounding the
quantity
%
%e1.1 #&#
\begin{equation}
\label{1} d_{c}\bigl(\law(W), \law(Z)\bigr) =\sup
_{A\in\mathcal{A}}\bigl\llvert \P(W\in A)-\P(Z \in A)\bigr\rrvert,
\end{equation}
where $\mathcal{A}$ denotes the collection of all the convex sets in
$\IR^d$.

Our main tool is Stein's method for the multivariate normal
distribution, which has already been used to obtain bounds on \eq{1},
the two
main contributions coming from G{\"o}tze \cite{Goetze1991} for sums of
independent random
vectors (see also Bhattacharya and Holmes \cite{Bhattacharya2010}), and Rinott and Rotar{$'$} \cite{Rinott1996} for
sums of
dependent random vectors that allow for a certain decomposition. Most other
contributions on multivariate normal approximation via Stein's method have
focused on smooth functions; see, for example, Barbour \cite{Barbour1990},
Goldstein and Rinott \cite{Goldstein1996}, Rai{\v{c}} \cite{Raic2004} and
Reinert and R{\"o}llin \cite{Reinert2009}.

The main aim of this article is to improve the results of Rinott and Rotar{$'$} \cite{Rinott1996} in
two important ways. First, we remove a logarithmic factor in the error
bound of Rinott and Rotar{$'$} \cite{Rinott1996}. The techniques that allow us to do this are
taken from Fang \cite{Fang2012} and will yield optimal rates of
convergence in some applications. Second, the
assumptions made on the dependence by Rinott and Rotar{$'$} \cite{Rinott1996} do not cover the
applications we will discuss here. Instead, we will use a multivariate
generalisation of Stein couplings to achieve the necessary generality. Stein
couplings, introduced by Chen and R{\"o}llin \cite{Chen2010b}, capture the minimal structural
assumption necessary to use Stein's method for normal approximation.

We will also keep the dependence of the
constants on the dimensionality explicit and as small as possible without
blowing up the proofs, but we do not pursue optimality in that respect.

The remainder of this article is organised as follows. In Section~\ref
{sec2}, we
will state our main abstract theorem, but we will postpone the (rather
technical) proof to Section~\ref{sec4}. In Section~\ref{sec3}, we
will discuss
two main applications, one involving permutation statistics and the
other a new
test for heterogeneity for dense graphs. In Section~\ref{sec5}, we
will present
some standard multivariate Stein couplings for reference.

%s2 #&#
\section{Main results} \label{sec2}

Stein couplings were introduced by Chen and R{\"o}llin \cite{Chen2010b} in order to unify
many of
the approaches developed around Stein's method for normal
approximation, such
as local approach, size biasing and exchangeable pairs, to name but a
few. In
the spirit of Chen and R{\"o}llin \cite{Chen2010b}, we give a multivariate definition of Stein
couplings.

%de2.1 #&#
\begin{definition}\label{def1}
A triple of square integrable $d$-dimensional random vectors $(W, W',
G)$ is
called a \emph{$d$-dimensional Stein coupling} if
%
%e2.1 #&#
\begin{equation}
\label{4} \E\bigl\{G^t F\bigl(W'\bigr)
-G^t F(W)\bigr\}=\E\bigl\{W^t F(W)\bigr\}
\end{equation}
for all $F\dvtx  \IR^d\rightarrow\IR^d$ for which the expectations exist.
\end{definition}

%re2.2 #&#
\begin{remark}\label{rem1} By choosing $F(w) = e_i$, where $e_i$ is
the $i$th
unit vector, it follows from \eq{4} that $\IE W_i = 0$. Therefore,
$\IE
W = 0$
is a necessary condition for a Stein coupling. Choosing $F(w) = w_j
e_i$, it
follows that
%
%e2.2 #&#
\begin{equation}
\label{106} \IE\bigl\{G\bigl(W'-W\bigr)^t\bigr\} =
\Cov(W).
\end{equation}
\end{remark}

Throughout this article, $\llvert  x\rrvert  $ denotes the
Euclidean norm of $x\in
\IR^d$,
and $\Id_d$ denotes the \mbox{$d$-dimensional} identity matrix. To shorten the
formulas somewhat, we will write $\IE^W( \cdot)$ to denote
conditional expectation $\IE( \cdot|W)$.

With this, we can formulate our main result.

%th2.1 #&#
\begin{theorem}\label{thm1} Let $(W,W',G)$ be a $d$-dimensional Stein
coupling. Assume that $\Cov(W)=\Id_d$. With $D=W'-W$,
suppose that there are positive constants $\alpha$ and $\beta$ such that
%
%e2.3 #&#
\begin{equation}
\label{5} \llvert G\rrvert \leq\alpha,\qquad\llvert D\rrvert \leq\beta.
\end{equation}
Then there is a universal constant $C$ such that
%
%e2.4 #&#
\begin{eqnarray}
\label{6} &&d_c\bigl(\law(W), \law(Z)\bigr)\nonumber
\\[-8pt]\\[-8pt]
&&\quad \leq C\bigl( d^{7/4} \alpha\E\llvert D\rrvert ^2+d^{1/4}
\beta +d^{7/8}\alpha^{1/2} B_1^{1/2}
+d^{3/8} B_2 +d^{1/8}B_3^{1/2}
\bigr),\nonumber
\end{eqnarray}
where $Z$ is a $d$-dimensional standard Gaussian random vector and
\begin{eqnarray*}
B_1 &=& \sqrt{\Var\E^W\llvert D\rrvert ^2},\qquad
B_2 = \sqrt{\sum_{i,j=1}^d
\Var\E^W (G_i D_j)},
\\
B_3 &=& \sqrt{\sum_{i,j,k=1}^d
\Var\E^W (G_i D_j D_k)}.
\end{eqnarray*}
\end{theorem}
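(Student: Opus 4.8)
The plan is to use Stein's method for the multivariate normal distribution via the Stein equation $\nabla^t \nabla f(w) - w^t \nabla f(w) = h(w) - \E h(Z)$, where $h$ is a smoothed version of the indicator of a convex set $A$. Because the class of convex sets is not Lipschitz, the first step is to replace $\mathbbm{1}_A$ by a smoothed test function $h_\eps$ obtained by convolving with a Gaussian of small variance $\eps^2$ (or by the usual mollification on the $\eps$-neighbourhood of $A$), incurring an error of order $d^{1/4}\eps$ from the Gaussian perimeter bound for convex sets (this is where a factor like $d^{1/4}$ enters, following \cite{Bhattacharya2010}); the parameter $\eps$ will be optimized at the end. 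One then needs bounds on the derivatives of the solution $f = f_{h_\eps}$ of the Stein equation up to third order, with the dependence on $\eps$ and $d$ made explicit — roughly $\|\nabla^2 f\| \lesssim \log(1/\eps)$-type or $\eps^{-1}$-type bounds and $\|\nabla^3 f\|$ one power of $\eps^{-1}$ worse, carrying the appropriate powers of $d$.

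The core of the argument is to expand $\E\{W^t \nabla f(W)\}$ using the Stein coupling identity \eq{4}: write $\E\{W^t \nabla f(W)\} = \E\{G^t(\nabla f(W') - \nabla f(W))\}$ and Taylor-expand $\nabla f(W') = \nabla f(W + D)$ to second order in $D$, giving a main term $\E\{G^t \nabla^2 f(W) D\}$, a second-order term involving $\nabla^3 f$ and $D\otimes D$ controlled by $\alpha \E|D|^2 \|\nabla^3 f\|$, and we must match the main term against $\E\{\nabla^t \nabla f(W)\} = \sum_i \E\{\partial_{ii} f(W)\}$. Since $\Cov(W) = \Id_d$ and (by Remark~\ref{rem1}) $\E\{G D^t\} = \Id_d$, the difference $\E\{\sum_{ij}\partial_{ij}f(W)(G_i D_j - \delta_{ij})\}$ is handled by conditioning on $W$ and replacing $G_i D_j$ by $\E^W(G_i D_j)$: the replacement cost is exactly $\E^W(G_iD_j) - \E(G_iD_j)$ summed against $\partial_{ij}f$, bounded via Cauchy–Schwarz by $\|\nabla^2 f\|_{\text{(suitably)}}\sum_{ij}\sqrt{\Var\E^W(G_iD_j)} = \|\cdot\| B_2$. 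The analogous conditioning applied to the $\nabla^3 f$ term produces the $B_1$ and $B_3$ contributions (the $|D|^2$ piece via $B_1 = \sqrt{\Var\E^W|D|^2}$ and the full third-order tensor via $B_3$).

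The key technical device for removing the logarithmic factor — as advertised, taken from \cite{Fang2012} — is to avoid the crude bound $\|\nabla^3 f\| \lesssim \eps^{-1}\|\nabla^2 f\|$ and instead keep one derivative on the test function: rather than expanding to an explicit $\nabla^3 f$ term, one writes the remainder as an integral $\int_0^1 (1-t)\,\E\{G^t \nabla^2(\nabla f)(W+tD)[D,D]\}\,dt$ and, crucially, handles it by a further conditioning/smoothing argument that trades the singular third-derivative bound for a combination of a better second-derivative bound and an extra smoothing error, so that the net $\eps$-dependence is $\eps^{-1}$ rather than $\eps^{-1}\log(1/\eps)$. Concretely, one splits $\E\{G^t(\nabla f(W')-\nabla f(W))\}$ differently, using the identity for $f$ applied to a shifted argument, or one uses the fact that the second derivative of the Stein solution for a Gaussian-smoothed $h_\eps$ already enjoys an $\eps$-free bound on its oscillation. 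Balancing the accumulated errors — smoothing error $\sim d^{1/4}\eps$, the second-order Taylor error $\sim d^{?}\alpha\E|D|^2\eps^{-1}$, the $B_1,B_2,B_3$ terms each weighted by the appropriate derivative bound and power of $\eps$ — and optimizing over $\eps$ produces the square-root structure $\alpha^{1/2}B_1^{1/2}$, $B_2$, $B_3^{1/2}$ and the stated powers of $d$. I expect the main obstacle to be exactly this: getting the derivative bounds on the Stein solution with \emph{sharp} $\eps$- and $d$-dependence and carrying out the \cite{Fang2012}-style refinement cleanly enough that no spurious $\log$ reappears, while keeping every power of $d$ honest; the $\beta$ term, by contrast, comes out easily from the trivial bound $|D|\le\beta$ applied to whichever single leftover term cannot be smoothed.
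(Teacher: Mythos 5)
Your high-level framework matches the paper's: mollify the indicator of a convex set (Bentkus-style, incurring $4d^{1/4}\eps$), expand $\E\{W^t\nabla f(W)\}$ via the Stein coupling identity and a Taylor step in $D$, condition on $W$ to extract $B_1,B_2,B_3$, and optimize over $\eps$. But you misidentify where the logarithm would arise, and consequently guess the wrong fix for it---and that fix is the stated point of the theorem. You propose to bound the second-derivative term $\sum_{i,j}\E\{(\delta_{ij}-G_iD_j)f_{ij}(W)\}$ by Cauchy--Schwarz against some norm of $\nabla^2 f$. That is precisely the Rinott--Rotar' route, and it costs $\abs{\log\eps}$, since $\sup_{i,j}\supnorm{f_{ij}}\asymp\log(1/\eps)$ for the mollified $h_{A,\eps}$. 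The logarithm therefore lives in the second-derivative term $R_1$, not (as you assume) in the third-order Taylor remainder $R_2$, whose cost is a clean $\eps^{-1}$.

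The paper's device (from Fang 2012) avoids ever bounding $\supnorm{f_{ij}}$. Instead $f_{ij}$ is kept as the Gaussian-interpolation integral over $s\in(0,1)$, split at $s=\eps^2$: for $s\geq\eps^2$ both derivatives are thrown onto $\phi(z)$ by integration by parts (formula \eq{36}), and the resulting increment $h\klr{\sqrt{1-s}W+\sqrt{s}z}-h\klr{\sqrt{1-s}W}$ is controlled in mean square by the Gaussian shell concentration applied to $\sqrt{1-s}\,W$---which, since $W$ is only approximately normal, brings in $\kappa:=d_c(\law W,\law Z)$ itself, giving a bound of the shape $\klr{d^{1/4}\eps/\sqrt{1-s}+d^{1/4}\sqrt{s/(1-s)}\,\abs{z}+\kappa}^{1/2}$. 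Integrated against $s^{-1}\,ds$ this yields $\abs{R_1}\leq C B_2\klr{d^{3/8}+d^{1/8}\eps^{1/2}\abs{\log\eps}+\kappa^{1/2}\abs{\log\eps}}$, and hence a \emph{recursive} inequality \eq{38} in which $\kappa$ appears on both sides. After the optimal choice of $\eps$, the factors $\eps^{1/2}\abs{\log\eps}$ and $\kappa^{1/2}\abs{\log\eps}$ are bounded absolutely, and the log vanishes. The same split at $\eps^2$ and the same concentration-in-$\kappa$ trick are reused in $R_2$ (with a further integration by parts and the tensor identity \eq{37}) to produce the $B_1$ and $B_3$ terms with the claimed $d$-powers. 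Your plan as written---lacking the $\kappa$-recursion and the $s$-split of the solution formula---would still carry a logarithm, and your fallback guess of ``a better second-derivative bound on the Gaussian-smoothed $h_\eps$'' is not a substitute for it.
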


As usual, we can upper bound $\Var\IE^W(\cdot)$ by
$\Var\IE^{\mathcal{F}}(\cdot)$ for any $\sigma$-algebra $\mathcal
{F}\supset
\sigma(W)$. This is a standard trick in Stein's method and will be used
in the
applications without further mention.

Note that, if $(W,W',G)$ is a $d$-dimensional Stein coupling
and $A$ is a $m\times d$ matrix, $m\geq1$, then $(AW,AW',AG)$ is an
$m$-dimensional Stein coupling. In this light, assuming that $\Cov(W) =
\Id_d$
is a matter of convenience rather than a real restriction. If $A$ is a
$d\times d$ matrix, denote by $\llVert  A\rrVert  _2$ its
operator norm with respect
to the Euclidean norm. Noticing that $d_c$ is invariant under linear
transformations, we
have the following consequence of Theorem~\ref{thm1}.

%co2.2 #&#
\begin{corollary}\label{cor1} Under the conditions of Theorem~\ref
{thm1}, but
now allowing
$\Cov(W) = \Sigma$ for any positive definite $\Sigma$, there is a
universal constant
$C$ such that
%
%e2.5 #&#
\begin{eqnarray}
\label{7} &&d_c\bigl(\law\bigl(\Sigma^{-1/2} W\bigr),
\law(Z)\bigr)\nonumber\\
&&\quad = d_c\bigl(\law( W), \law\bigl(\Sigma^{1/2}Z
\bigr)\bigr)
\\
&& \quad\leq C \bigl( d^{7/4} \alpha s_2^3\E\llvert D
\rrvert ^2+ d^{1/4} s_2\beta+ d^{7/8}
s_2^{3/2} \alpha^{1/2} B_1^{1/2}
+ d^{3/8} s_2^2 B_2
+d^{1/8} s_2^{3/2} B_3^{1/2}
\bigr),\nonumber
\end{eqnarray}
where $s_2 = \llVert \Sigma^{-1/2}\rrVert  _2$.
\end{corollary}

Note that the corollary cannot be expected to be informative if $\Sigma$
is singular or close to singular. In particular, the $W_i$ need to be
standardized so that $\Var W_i$, $1\leq i\leq d$, are all of the same
order. The proof of Corollary~\ref{cor1} is given in Section~\ref{sec4}.

%re2.3 #&#
\begin{remark}\label{rem2} If $(W,W')$ is an exchangeable pair of
$d$-dimensional vectors and
%
%e2.6 #&#
\begin{equation}
\label{8a} \IE^W\bigl(W'-W\bigr) = -\Lambda W
\end{equation}
for some invertible $d\times d$-matrix $\Lambda$, then
$(W,W',\frac{1}{2}\Lambda^{-1}(W'-W))$ is a Stein coupling and
Theorem~\ref{thm1} can be applied. In the special case where $\Lambda=
\lambda\II_d$, or in other words, if we have
%
%e2.7 #&#
\begin{equation}
\label{8} \IE^W\bigl(W'-W\bigr) = -\lambda W
\end{equation}
for some $0<\lambda<1$, then one can prove a special case of
Theorem~\ref{thm1} without using exchangeability, but only assuming
that $\law(W) = \law(W')$. A sketch of the proof will be given in
Section~\ref{sec4}.
This is analogous to Reinert and R{\"o}llin \cite{Reinert2009}, where a result similar to our
Theorem~\ref{thm1} was obtained for the special case of \eq{8a}, but
for a smooth metric, and where also exchangeabiliy was relaxed to equal
marginals in the special case of \eq{8}.
\end{remark}

%s3 #&#
\section{Applications} \label{sec3}

%s3.1 #&#
\subsection{A confidence interval for dense homogeneous random graphs}
\label{sec31}

One of the basic problems in the statistical analysis of graphs is to
test whether
the connections between vertices in a graph have arisen `completely at
random', or whether there is more structure in the graph. Among several
possible null hypotheses, one of the best-studied is the Erd\H{o}s--R\'
enyi random
graph $G(n,p)$, where two vertices are connected with probability $p$
and remain disconnected with probability $1-p$, independently of all else.\looseness=-1

Many test statistics have been analysed in the literature, such as
diameter, maximal degree, number of
triangles, etc.; see, for example, Pao, Coppersmith and Priebe \cite{Pao2011} for a recent
overview and simulation studies of the performance of these and other
test statistics. Despite the fact that much is known about the
behaviour of these test statistics under the null model $G(n,p)$, it
seems that little is known, at least theoretically, about how these
statistics behave under alternative models, such as heterogeneous
models, where the edge probabilities may vary. Here, as a first step,
we propose and justify a test that is based on the theory of dense
graph limits, and we will show that our test is \emph{consistent}, that
is, any deviation from the homogeneous model will eventually be
detected (in a sense made precise below).

\subsubsection*{Theory of dense graph limits}

Before we start with the statistical aspect of the problem, we first
give a brief introduction to the theory of dense graph limits. We will
only discuss those parts of the theory that are necessary for the
purpose of our application; we refer to Borgs, Chayes, Lov{\'a}sz, S{\'o}s and   Vesztergombi \cite{Borgs2008,Borgs2012} and
Bollob{\'a}s and Riordan \cite{Bollobas2009} for in-depth discussions. Also, dense graph limit
theory is intimately related to the theory of partially exchangeable
arrays as studied by Aldous \cite{Aldous1981}; see Diaconis and Janson \cite{Diaconis2008}.

In what follows, all graphs are assumed to be simple, that is, graphs
that contain no loops and no multiple edges, and, moreover, we assume
that all graphs are undirected. To\vadjust{\goodbreak} begin with, we consider non-random
graphs. Let $F$ and $G$ be graphs with $k$, respectively, $n$ vertices.
Denote by $\endo(F,G)$ the set of injective graph homomorphisms from
$F$ into $G$, and define
\[
t(F,G) = \frac{\llvert \endo(F,G)\rrvert  }{(n)_k},
\]
where $(n)_k := n(n-1)\cdots(n-k+1)$ (note that we follow the notation
of Bollob{\'a}s and Riordan \cite{Bollobas2009}; our $t$ is what Borgs, Chayes, Lov{\'a}sz, S{\'o}s and   Vesztergombi \cite{Borgs2008} denote
by $t_{\endo}$).
The number $\llvert \endo(F,G)\rrvert  $ is just the number
of copies of $F$ in
$G$ multiplied by the number of graph automorphisms of $F$. Since
$\llvert \endo(F,G)\rrvert  \leq(n)_k$, it is clear that
$0\leq t(F,G) \leq1$, and we
can think of this value as the ``density of $F$ in $G$''.

Let $(G_n)$ be a sequence of graphs (where $n\geq n_0$ for some
unspecified $n_0$) and for convenience assume that $G_n$ has $n$
vertices. We call this sequence a \emph{dense graph sequence} if the
number of
edges is of order $n^2$. In other words, if $K_m$ denotes the complete
graph on $m$ vertices, a graph sequence $(G_n)$ is called dense if
$\liminf_{n\toinf} t(K_2,G_n)>0$, and we will in fact mostly consider
sequences for which $\lim_{n\toinf} t(K_2,G_n)$ exists. Although it
would not pose any difficulties to allow the case $\lim_{n\toinf}
t(K_2,G_n) = 0$ (the ``sparse'' case), this only leads to degenerate
results in the context of dense graph theory, and is therefore excluded
for the sake of clarity.

We say that a dense graph sequence $(G_n)$ is convergent if $\lim_{n\toinf} t(F,G_n)$ exists for every finite graph $F$. We can
construct a metric $d$ on the set of isomorphism classes of finite
graphs, denoted by $\cF$, that quantifies this convergence. Let
$F_1,F_2,\dots$ be an arbitrary enumeration of the set of finite
graphs. For two graphs $G_1$ and $G_2$, let
\[
d(G_1,G_2) = \sum_{i\geq1}
2^{-i}\bigl\llvert t(F_i,G_1) -
t(F_i,G_2)\bigr\rrvert.
\]
It turns out that the metric space $(\cF,d)$ is not complete. The usual
way of constructing the completion of a metric space is to form
equivalence classes of sequences that are Cauchy with respect to the
metric. However, it turns out that there is a much more natural representation.

Let $\kappa\dvtx [0,1]^2\to[0,1]$ be a measurable and symmetric function;
we will
call any such function a \emph{standard kernel} (called \emph{graphon}
by Borgs, Chayes, Lov{\'a}sz, S{\'o}s and   Vesztergombi \cite{Borgs2008}). For any finite graph $F$ with
$k$ vertices, let
\[
t(F,\kappa) = \int_{0}^1\cdots\int
_0^1 \prod_{\{i,j\}\subset
E(F)}
\kappa(x_i,x_j) \,\mathrm{d} x_1\cdots\, \mathrm{d}
x_k,
\]
where $E(F)$ denotes the edge set of graph $F$. The quantity
$t(F,\kappa
)$ can be interpreted the ``density of $F$ in $\kappa$'', and we will
give a more intuitive representation of $t(F,\kappa)$ involving random
graphs later.

One of the key results of dense graph theory (see, for example, Borgs, Chayes, Lov{\'a}sz, S{\'o}s and   Vesztergombi \cite{Borgs2008}, Theorem~3.1) is the following. If $t(F,G_n)$ converges for
every $F$, that is, if $(G_n)$ is a Cauchy sequence with respect to
$d$, then there is a standard kernel $\kappa$ such that $\lim t(F,G_n)
= t(F,\kappa)$ for every $F$. We can therefore say that \emph{$\kappa$
is a limit of the graph sequence $(G_n)$}. Analogous to the fact that
there are graphs that are isomorphic to each other, there can (and
typically will) be several standard kernels representing the same
limit. Therefore, an additional step of forming equivalence classes of
standard kernels is necessary to obtain the actual completion of the
metric space $(\cF,d)$. Since we do not need this we refer again to
Borgs, Chayes, Lov{\'a}sz, S{\'o}s and   Vesztergombi \cite{Borgs2008,Borgs2012} on how to characterise these equivalence classes.

So far, all graphs have been non-random. If now $(G_n)$ is a sequence
of random graphs defined on a common probability space $\Omega$, we
will be interested in statements of the form ``$(G_n)$ converges to
$\kappa$ almost surely'', meaning that with probability $1$, the
realisation of a sequence $G_1(\omega),G_2(\omega),\dots$ converges to
$\kappa$ in the sense introduced above. Although it is possible to
allow for $\kappa$ to be random as well, we will only consider fixed
$\kappa$ in what follows.

For a given standard kernel $\kappa$, there is an elegant sampling
procedure to create random graphs that converge to $\kappa$ almost
surely. Let $U_1,U_2,\dots$ be a sequence of independent random
variables that are uniformly distributed on the interval $[0,1]$. To
construct $G_n$, connect vertices $i$ and $j$ with probability $\kappa
(U_i,U_j)$, independently of all other edges. We denote the
distribution of the graph $G_n$ obtained in this way by $G(n,\kappa)$
and it is clear that $G(n,p)$ for $0\leq p\leq1$ can be identified
with $G(n,\kappa)$ for $\kappa\equiv p$, the constant standard kernel.
Note that the edges of $G(n,\kappa)$ are conditionally independent
given $U_1,\dots,U_n$, but in general not unconditionally independent.
It is now easy to verify that,
if $G_n\sim G(n,\kappa)$, then
\[
\IE t(F,G_n) = t(F,\kappa).
\]
Furthermore, we have the following concentration result, which, by
Borell--Cantelli, immediately implies that $(G_n)$ converges to $\kappa$
almost surely.

%le3.1 #&#
\begin{lemma}[({Borgs, Chayes, Lov{\'a}sz, S{\'o}s and   Vesztergombi \cite{Borgs2008}, Lemma~4.4})]\label{lem20} If $G_n\sim
G(n,\kappa)$ for some standard kernel $\kappa$, and if $F$ is a graph
on $k$ vertices, then
\[
\IP\bigl[\bigl\llvert t(F,G_n) - t(F,\kappa)\bigr\rrvert > \eps
\bigr] \leq \exp\biggl(-\frac
{\eps^2n}{4k^2}\biggr)
\]
for every $\eps>0$.
\end{lemma}

%re3.1 #&#
\begin{remark}\label{rem30}
A remark about models that are more general than $G(n,\kappa)$ is in
place. It is important to note that dense graph theory is a first order
approximation of dense graphs, analogous to the law of large number for
random variables. It can be shown that the completion of $(\cF,d)$ is
compact and therefore, for any dense graph sequence, there must be
accumulation points which can be represented by a set $\cK$ of standard
kernels. So, if one considers graph models that produce dense graphs
that allow for more complex dependence between edges, any realisation
of a large enough graph from such a model will be close to at least one
of the standard kernels from its accumulation points $\cK$. Thus, from
this first order point of view, any dependence between the edges
becomes irrelevant in the limit, since every $\kappa\in\cK$ is also the
limit of the model $G(n,\kappa)$. As of yet, there seems to be no
established theory of second order fluctuations of dense graphs around
their limits that would capture more subtle aspects of such graphs.
\end{remark}

\subsubsection*{Characterisation of homogenous Erd\H{o}s--R\'enyi graphs}
Recall that $K_m$ denotes the complete graph of size $m$, and let $C_m$
be the cycle
graph of size $m$. Chung, Graham and Wilson \cite{Chung1989} proved the following surprising
result, which we shall present reformulated in the language of dense
graph limit theory (see Lov{\'a}sz and Szegedy \cite{Lovasz2011} for generalisations of these
findings).

%th3.2 #&#
\begin{theorem}[({Chung, Graham and Wilson \cite{Chung1989}, Theorem~1})]\label{thm31} If $(G_n)$
is a (non-random) dense graph sequence such that
\[
t(K_2,G_n) \to p\quad\mbox{and}\quad t(C_4,G_n)
\to p^4
\]
for some $0<p\leq1$, then $(G_n)$ converges and the limit is the
constant standard kernel $\kappa\equiv p$.
\end{theorem}

In other words, $\kappa\equiv p$ is the only standard kernel with
$t(K_2,\kappa) =
p$ and $t(C_4,\kappa)= p^4$, and it is not difficult to show that, if
$\kappa$ is not constant and $t(K_2,\kappa)=p$, then $t(C_4,\kappa) >
p^4$. This result suggests that we
can use the number of edges and $4$-cycles in order to test whether
$\kappa$ is constant or not.
Indeed, for $G_n\sim G(n,\kappa)$ with non-constant $\kappa$, we should
be able to detect a discrepancy between the edge density to the fourth
power and 4-cycle density if $n$ is large enough.

However, some care is needed. If $G_n$ is a given graph of size $n$,
define the two
statistics
\[
T_1(G_n) = \frac{\llvert \endo(K_2,G_n)\rrvert  }{2},\qquad T_2(G_n)
= \frac{\llvert \endo(C_4,G_n)\rrvert  }{8}.
\]
The factors $2$ and $8$, respectively, are the sizes of the automorphism
groups of $K_2$ and $C_4$, respectively. Therefore, $T_1$ is the number of
edges in $G_n$ and $T_2$ is the number of $4$-cycles in $G_n$.
By straightforward calculations we have that, if $G_n\sim G(n,p)$,
\[
\Var\bigl(T_1(G_n)\bigr)={n \choose2} p(1-p),\qquad\Cov
\bigl(T_1(G_n), T_2(G_n)
\bigr)=12{n\choose4} p^4(1-p)
\]
and
\begin{eqnarray*}
\Var\bigl(T_2(G_n)\bigr) = 3{n\choose4}
p^4 (1-p) \bigl(1+p-13p^2+4np^2+35p^3-24np^3+4n^2p^3
\bigr).
\end{eqnarray*}
It is clear from this that $\Cor(T_1(G_n),T_2(G_n))\to1$ as $n\toinf$,
hence, in the limit, the fluctuation of the   number of $4$-cycles
is determined by that of the number of edges in the graph; see Janson and Nowicki \cite{Janson1991} for such and more general results. Thus, we cannot use
these values directly to construct our test.

Following Janson and Nowicki \cite{Janson1991}, we can instead consider the density of $4$-cycles
\emph{corrected} by the edge density (this is essentially the first non-leading
term in a Hoeffding-type decomposition for the $4$-cycle
count). To this end, define the normalised edge count
\[
W_1(p,G_n) = \frac{T_1(G_n) - {n\choose2}p }{\sigma_1}, \qquad\mbox{with }
\sigma_1^2 = {n\choose2}p(1-p),
\]
and the \emph{corrected} and normalised $4$-cycle count
\[
W_2(p,G_n) = \frac{T_2(G_n) - 2{n-2 \choose2}p^3T_1(G_n) + 9{n\choose
4}p^4}{\sigma_2}
\]
with
\[
\sigma_2^2 = 3{n\choose4}p^4(1-p)^2
\bigl(1+2p+(4n-11)p^2\bigr);
\]
it is easy to see that $\Cov(W_1,W_2)=0$.
In order to motivate the choice of $W_1$ and $W_2$, note that, from
Lemma~\ref{lem20} and for general $\kappa$ and $G_n\sim G(n,\kappa)$,
\begin{eqnarray*}
\frac{W_1(p,G_n)}{n} & \to&\frac{1}{\sqrt{2p(1-p)}}\bigl(t(K_2,\kappa)-p\bigr),
\\
\frac{W_2(p,G_n)}{n^{3/2}} & \to&\frac{1}{4\sqrt{2} p^3(1-p)} \bigl(t(C_4,\kappa
)-4p^3t(K_2,\kappa)+3p^2\bigr)
\end{eqnarray*}
almost surely as $n\toinf$, so that $W_1(p,G_n)$ and $W_2(p,G_n)$ can
only expected to be near zero if $\kappa\equiv p$.

Barbour, Karo{\'n}ski and Ruci{\'n}ski \cite{Barbour1989} use Stein's method to prove univariate normal
approximations of subgraph counts and related statistics, but for
quantities such as $W_2$ they resort to the method of
moments. Corresponding multivariate results where obtained by Janson and Nowicki \cite{Janson1991} in great
generality for incomplete $U$-statistics using Hoeffding-type
decompositions and the methods of moments. For degenerate statistics
like $W_2$ they state that ``Stein's method does not seem to work in
that case''.

The reason that $W_2$ is more difficult to handle is that, if
represented as an incomplete $U$-statistic, many of the summands are
uncorrelated (see \eq{10} below), which requires more delicate
estimates. We note that the arguments of Barbour, Karo{\'n}ski and Ruci{\'n}ski \cite{Barbour1989} could be,
in fact, improved to cover such cases as well.

%th3.3 #&#
\begin{theorem}\label{thm2} Let $G_n\sim G(n,p)$ be a realisation of an
Erd\H{o}s--R\'enyi random graph on $n$ vertices with edge probability
$p$. Let $W =
(W_1(p,G_n),W_2(p,G_n))$ and
let $Z$ be a standard bi-variate normal random variable. There is a
universal constant $C$ independent of $p$ and $n$ such that
\[
d_c\bigl(\law(W), \law(Z)\bigr)\leq\frac{C}{p^9(1-p)^3\sqrt{n}}.
\]
\end{theorem}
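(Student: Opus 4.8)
The plan is to apply Theorem~\ref{thm1} (or, more precisely, Corollary~\ref{cor1}, since the two components of $W$ are not independent and their covariance matrix $\Sigma$ is not the identity) to a suitable bivariate Stein coupling built from the edge indicators of $G_n$. Write $G_n$ as the collection of independent indicators $X_e = \II[e\in G_n]$, $e\in\binom{[n]}{2}$, each $\Be(p)$. The standard device is a local/exchangeable-pair coupling obtained by resampling one edge: pick an edge $e$ uniformly at random, replace $X_e$ by an independent copy $X_e'$, and let $G_n'$ be the resulting graph; set $W' = \bklr{W_1(p,G_n'),W_2(p,G_n')}$. Because only one edge is changed, $W_1$ changes by $\bigo(1/\sigma_1)$ and $W_2$ by $\bigo(n^2/\sigma_2)$ — this is where the incompleteness/degeneracy of the $4$-cycle $U$-statistic is crucial: the change in $T_2$ from flipping one edge involves only the $\bigo(n^2)$ cycles through that edge. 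One checks that $\IE^W(W'-W) = -\Lambda W$ for an explicit (approximately diagonal) matrix $\Lambda$ of order $1/\binom n2$; since $\Lambda$ is invertible, $\bklr{W,W',\ahalf\Lambda^{-1}(W'-W)}$ is a bivariate Stein coupling in the sense of Remark~\ref{rem2}, with $G = \ahalf\Lambda^{-1}D$ of size $\bigo(n^2/\sigma_2) = \bigo(1)$ after the normalization.

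The second step is to collect the ingredients for Corollary~\ref{cor1}. The bound $s_2,s_\infty = \bigo(1)$ in the relevant $p$-range follows from computing $\Sigma = \Cov(W)$ (its entries are explicit low-degree polynomials in $p$, and $\Sigma$ is nonsingular precisely because $W_2$ is the edge-corrected cycle count; the powers of $p$ and $1-p$ in the final bound come from $\norm{\Sigma^{-1/2}}$ and from the normalizing constants $\sigma_1,\sigma_2$). The deterministic bounds $\abs G\le\alpha$, $\abs D\le\beta$ hold with $\alpha = \bigo(1)$ and $\beta = \bigo(1)$ (each of order $n^2/\sigma_2 = \bigo(1)$), and $\IE\abs D^2 = \bigo(1/n)$ — indeed $\IE\abs D^2$ is essentially $\binom n2^{-1}$ times a sum over edges of the squared per-edge change, which is $\bigo(n^{-1})$ after normalization. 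The genuinely computational part is bounding the conditional-variance terms $B_2 = \sum_{i,j}\sqrt{\Var\IE^W(G_iD_j)}$, $B_3 = \sum_{i,j,k}\sqrt{\Var\IE^W(G_iD_jD_k)}$ and $B_4 = \sum_i\sqrt{\Var\IE^W D_i^2}$. Here we use the standard trick mentioned after Theorem~\ref{thm1}: replace $\IE^W$ by $\IE^{\mathcal F}$ where $\mathcal F = \sigma(X_e : e\in\binom{[n]}2)$ is generated by the edge indicators (and the choice of resampled edge), so that $\IE^{\mathcal F}(G_iD_j)$ etc.\ become explicit polynomials in the $X_e$. The variance of such a polynomial is then estimated by the Efron--Stein / bounded-differences inequality (or by directly expanding into the orthogonal decomposition over edge sets), and one finds each $\Var\IE^{\mathcal F}(\cdot)$ to be $\bigo(1/n)$ after normalization, so that $B_2,B_3,B_4 = \bigo(n^{-1/2})$.

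Plugging into \eqref{7}: the term $\alpha s_2^2\IE\abs D^2$ contributes $\bigo(1/n)$, $s_2\beta$ contributes $\bigo(1)$ — wait, that is not $\lito(1)$, so in fact one must be more careful and use the sharper estimate $\abs D = \bigo(1/\sqrt n)$ rather than merely $\bigo(1)$; indeed $D = W'-W$ has all its mass on the single resampled edge and after normalization by $\sigma_1,\sigma_2\asymp n^2$ one gets $\beta = \bigo(n^{-1/2}\cdot n^2/\sigma_2)$ with an extra gain, and likewise $\alpha = \bigo(1)$ but $\abs D\abs G$-type products are smaller. So the dominant contributions are $s_\infty\alpha^{1/2}B_4^{1/2}$, $s_\infty^2 B_2$ and $s_\infty^{3/2}B_3^{1/2}$, each of order $n^{-1/4}$ — hmm, that gives $n^{-1/4}$, not $n^{-1/2}$. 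The resolution, and the main obstacle, is exactly the refinement from \cite{Fang2012} already incorporated into Theorem~\ref{thm1}: the $\sqrt{\Var\IE^W(\cdot)}$ terms enter to the power $1/2$ multiplied by extra factors, and a careful accounting shows the $B$-terms here are actually $\bigo(1/n)$ (not $\bigo(n^{-1/2})$), because the relevant conditional expectations $\IE^{\mathcal F}(G_iD_j)$ — after subtracting their mean — are sums of $\bigo(n)$ independent-ish contributions each of size $\bigo(n^{-3/2})$ relative to the normalization, giving variance $\bigo(n^2\cdot n^{-3}) = \bigo(n^{-1})$ and hence $B_i^{1/2} = \bigo(n^{-1/2})$. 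Tracking the powers of $p$ and $1-p$ through $s_2$, $s_\infty$, $\sigma_1^{-1}$, $\sigma_2^{-1}$ and the moments of the edge indicators yields the stated $p^{-9}(1-p)^{-3}$ prefactor. The single hardest step is the variance bound for $B_3$, since $G_iD_jD_k$ is a degree-$5$-ish polynomial in the edge indicators and one must exploit the degeneracy (cancellation of the low-order Hoeffding terms, which is what the $9\binom n4 p^4$ correction in $W_2$ is designed to produce) to avoid losing a factor of $n$; this is precisely the point where \cite{Janson1991} found the direct Stein approach to stall, and where the incomplete-$U$-statistic structure must be used carefully.
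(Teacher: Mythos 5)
The paper does not use an exchangeable pair at all: it represents $W$ as a sum of locally dependent vectors $X_\iota$ indexed by $4$-tuples $\iota = (i,j,k,l)$ (see~\eqref{12}), and then invokes the local-dependence Stein coupling of Section~\ref{sec5}, namely $(W,W',G) = \bklr{W, W - \sum_{\kappa\in A_I}X_\kappa, -\binom{n}{4}X_I}$ with $I$ uniform over ordered $4$-tuples. This is an exact Stein coupling by construction, and Theorem~\ref{thm1} applies directly because $\Cov(W) = \Id_2$ exactly: Lemma~\ref{lem1} gives $\IE(\eta_{ijkl}I_{uv}) = 0$, hence $\Cov(W_1,W_2) = 0$, and $\Var W_1 = \Var W_2 = 1$ by the choice of $\sigma_1,\sigma_2$. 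Your opening claim that ``$\Sigma$ is not the identity'' (and hence that Corollary~\ref{cor1} is needed) confuses lack of independence with nonzero correlation; the whole point of the Hoeffding-type correction in $W_2$ is to make it uncorrelated with $W_1$.

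The more serious gap is in the coupling itself. For your edge-resampling pair, the linearity condition $\IE^W(W'-W) = -\Lambda W$ that you invoke via Remark~\ref{rem2} does \emph{not} hold for the second coordinate. The kernel $\eta_{ijkl}$ is only first-order degenerate: $\IE(\eta_{ijkl}\mid I_{uv}) = 0$ for a single edge, but resampling one edge of a cycle and conditioning on the other three gives, with $a = I_{ij}, b = I_{jk}, c = I_{kl}, d = I_{li}$,
\be{
  \IE\bkle{\eta_{ijkl}\mid \mathcal{F}_{-\{ij\}}} - \eta_{ijkl}
  = (p-a)(bcd - p^3),
}
and summing over the four edges of the cycle produces $-4\eta_{ijkl}$ \emph{plus} a nonvanishing remainder $p(bcd+acd+abd+abc) - 3p^3(a+b+c+d) + 8p^4$ (a combination of $3$-paths and edges). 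Thus $\IE^{\mathcal F}(W_2' - W_2) = -\tsfrac{4}{\binom n2}W_2 + R$ with $R\not\equiv 0$, and there is no reason for $\IE^W R = 0$, so $\bklr{W,W',\ahalf\Lambda^{-1}D}$ is not a Stein coupling and Remark~\ref{rem2} does not apply. One would have to carry the remainder $R$ through the whole argument, which you do not do; this is precisely the kind of difficulty that the local-dependence coupling sidesteps.

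Finally, several of your order-of-magnitude claims are off: $n^2/\sigma_2 \asymp n^{-1/2}p^{-3}(1-p)^{-1}$, not $\bigo(1)$; and $G = \ahalf\Lambda^{-1}D$ carries the $\Lambda^{-1} \asymp n^2$ factor, so $|G| \asymp n^{3/2}/(p^3(1-p))$ (matching the paper's $\alpha$), not $\bigo(1)$. The self-corrections later in your text partially recover $\beta \asymp n^{-1/2}$ but never fix $\alpha$, and the discussion of whether the $B$-terms are $\bigo(n^{-1/2})$ or $\bigo(n^{-1})$ conflates $B_i$ with $B_i^{1/2}$. The paper's actual computations (via Lemma~\ref{lem1} and the orthogonality $\IE(X_{2,\kappa}X_{2,\kappa'}) = 0$ when $|\kappa\cap\kappa'|\le 2$) give $B_1 \asymp n^{-5/2}$, $B_2\asymp n^{-1/2}$, $B_3\asymp n^{-1}$, all contributing $n^{-1/2}$ to~\eqref{6}.
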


Theorem~\ref{thm2} justifies the following procedure to construct a confidence
set for the family of Erd\H{o}s--R\'enyi random graphs. Let $G_n$ be a
simple graph
of size $n$.
Fix $0<\alpha<1$ and define the $1-\alpha$
confidence set as
\[
\CI_{1-\alpha}(G_n) = \bigl\{0<p<1 \dvtx  W_1^2(p,G_n)+W_2^2(p,G_n)
\leq q_{1-\alpha}\bigr\},
\]
where $q_{1-\alpha}$ is the $1-\alpha$ quantile of the $\chi^2$-distribution
with $2$ degrees of freedom. In words, $\CI_{1-\alpha}(G_n)$ is the set
of those $p$ for
which $G_n$ is ``compatible'' with the model $G(n,p)$ at the
significance level $\alpha$. If $\CI_{1-\alpha}(G_n)$ is empty, then
$G_n$ is not compatible with any homogeneous Erd\H{o}s--R\'enyi random graph
model.

For what follows, denote by $\IP_\kappa$ the distribution of $G_n$
under the law $G(n,\kappa)$ and let $\IP_p = \IP_\kappa$ for
$\kappa
\equiv p$.

%co3.4 #&#
\begin{corollary} For any given $0<p_l < p_u < 1$,
\[
\IP_p\bigl[p\notin\CI_{1-\alpha}(G_n)\bigr] =
\alpha + \bigo\bigl(n^{-1/2}\bigr)
\]
uniformly in $p\in[p_l,p_u]$ as $n\toinf$. Furthermore, if $\kappa$ is
a non-constant standard kernel, and if $n\geq\max\{19,54q_{1-\alpha
}^{1/2}/r_\kappa\}$, we have
%
%e3.1 #&#
\begin{equation}
\label{9} \IP_\kappa\bigl[\CI_{1-\alpha}(G_n)\neq
\emptyset\bigr] \leq2\exp\biggl(-\frac
{nr_\kappa^2}{144}\biggr),
\end{equation}
where
\[
r_\kappa^2 = \inf_{0<p<1}\bigl\{
\bigl(t(K_2,\kappa)-t(K_2,p)\bigr)^2 +
\bigl(t(C_4,\kappa)-t(C_4,p)\bigr)^2\bigr\}
\]
(note that $r_\kappa>0$ from Theorem~\ref{thm31} and the discussion
thereafter).
\end{corollary}

\begin{pf} The first part is immediate from Theorem~\ref{thm2}. For the
second part assume that $\kappa$ is not constant. Consider the points
$b(\kappa) = (t(K_2,\kappa),t(C_4, \kappa))$ and $b(p) = (p,p^4)$, and,
by slight abuse of notation, $b(n) = (t(K_2,G_n),t(C_4, G_n))$. Using
Lemma~\ref{lem20}, we have
%
%e3.2 #&#
\begin{eqnarray}
\label{306} &&\IP_\kappa\bigl[\bigl\llvert b(n) - b(\kappa)\bigr
\rrvert > \eps\bigr]\nonumber\\
&&\quad\leq\IP_\kappa\bigl[ \bigl\llvert b_1(n)-b_1(
\kappa)\bigr\rrvert > \eps /\sqrt{2}\bigr] + \IP_\kappa\bigl[ \bigl
\llvert b_2(n)-b_2(\kappa)\bigr\rrvert > \eps /\sqrt{2}
\bigr]
\\
&&\quad\leq2\exp\bigl(-\eps^2n/128\bigr)\nonumber
\end{eqnarray}
for any $\eps>0$.
Now, note that we can write
\[
W_1(p,G_n) = \frac{{n \choose2}}{\sigma_1} \bigl(b_1(n)-b_1(p)
\bigr),\qquad W_2(p,G_n)=\frac{3 {n \choose4}}{\sigma_2} w(n,p),
\]
where
\[
w(n,p) = \bigl(b_2(n)-b_2(p)\bigr) - 4p^3
\bigl(b_1(n)-b_1(p)\bigr).
\]
Let
\[
\delta= \frac{\sqrt{67}-4}{306}r_\kappa
\]
and define the events
\begin{eqnarray*}
A_1(p) &=& \bigl\{\bigl\llvert b(n)-b(\kappa)\bigr\rrvert
\leq8r_\kappa /9,\bigl\llvert b_1(n)-b_1(p)\bigr
\rrvert > \delta\bigr\},
\\
A_2(p) &=& \bigl\{\bigl\llvert b(n)-b(\kappa)\bigr\rrvert
\leq8r_\kappa /9,\bigl\llvert b_1(n)-b_1(p)\bigr
\rrvert \leq\delta\bigr\}.
\end{eqnarray*}
On one hand, we have
%
%e3.3 #&#
\begin{equation}
\label{302} W_1(p,G_n)^2 \geq\I
\bigl[A_1(p)\bigr]\biggl(\frac{{n\choose2}\delta
}{\sigma_1}\biggr)^2.
\end{equation}
On the other hand, since $\llvert  b(n)-b(\kappa)\rrvert  \leq
8r_\kappa/9$ implies
$\llvert  b(n)-b(p)\rrvert  \geq r_\kappa/9$, we have
\[
w(n,p) > \sqrt{(r_\kappa/9)^2 -\delta^2} - 4
\delta= \frac{r_\kappa}{18}
\]
on $A_2(p)$, and hence
%
%e3.4 #&#
\begin{equation}
\label{301} W_2(p,G_n)^2 \geq\I
\bigl[A_2(p)\bigr]\biggl(\frac{3{n\choose4}r_\kappa
}{18\sigma_2}\biggr)^2.
\end{equation}
Setting $A(p) = A_1(p)\cup A_2(p)$ and putting \eq{302} and \eq{301}
together, we obtain
\begin{eqnarray*}
&&W_1(p,G_n)^2+W_2(p,G_n)^2
\\
&&\quad\geq \I\bigl[A(p)\bigr] r_\kappa^2 \min\biggl\{ \biggl(
\frac{{n\choose4}}{6\sigma_2}\biggr)^2, \biggl(\frac{(\sqrt{67}-4){n\choose2}}{306\sigma_1}
\biggr)^2 \biggr\}
\\
&&\quad \geq \I\bigl[A(p)\bigr] r_\kappa^2 \min\bigl\{ 4.3
\cdot10^{-3}(n-1)_3, 3.7\cdot10^{-4}
(n)_2 \bigr\}.
\end{eqnarray*}
If $n\geq19$, we have
\[
\min\bigl\{ 4.3\cdot10^{-3}(n-1)_3, 3.7
\cdot10^{-4} (n)_2 \bigr\} \geq 3.5\cdot10^{-4}
n^2.
\]
Hence, if $n\geq\max\{19,(q_{1-\alpha}/( 3.5\cdot10^{-4}r_\kappa
^2))^{1/2}\}$, and using \eq{306},
\[
\IP_{\kappa}[\CI_{1-\alpha}=\emptyset] \geq\IP_\kappa\bigl[
\bigl\llvert b(n)-b(k)\bigr\rrvert \leq8r_\kappa/9\bigr] \geq1-2\exp
\bigl(-nr_\kappa^2/144\bigr),
\]
which implies \eq{9}.
\end{pf}

%re3.2 #&#
\begin{remark} Note that \eq{9} essentially says that, if the true
standard kernel $\kappa$ is non-constant, the test will eventually
detect this for $n$ large enough. It is not clear if this is still true if
$4$-cycles were to be replaced by triangles. Chung, Graham and Wilson \cite{Chung1989}, page 361,
give an example of non-constant standard kernel $\kappa$ for which
\[
t(K_2,\kappa) = \tfrac{1}{2},\qquad t(C_3,\kappa) =
\tfrac{1}{8},
\]
which also holds for the constant standard kernel $\kappa\equiv1/2$.
\end{remark}

%re3.3 #&#
\begin{remark}\label{rem33} To go back to the question posed at the
beginning of the section, namely to decide whether a given graph $G_n$
is compatible with any homogenous model $G(n,p)$, $0<p < 1$, we can
formulate this now more precisely as the testing problem
\[
H_0 \dvtx  G_n \sim G(n,p)\qquad\mbox{for some }0< p < 1
\]
against
\[
H_1 \dvtx  G_n\sim G(n,\kappa)\qquad \mbox{with }\kappa\not
\equiv p\mbox{ for all }0< p< 1.
\]
As we have already pointed out in Remark~\ref{rem30}, from the point of
view of first order approximation of dense graph limit theory, the
alternative hypothesis is already in its most general form, since the
models $G(n,\kappa)$ cover all possible dense graph limits.

We can now define a test $\psi(G_n)$ that rejects the null hypothesis
if $C_{1-\alpha}(G_n)$ is empty, that is,
\begin{eqnarray*}
\psi(G_n) = \I\bigl[W_1^2(p,G_n)+W_2^2(p,G_n)
> q_{1-\alpha}\mbox{ for all }0<p<1\bigr].
\end{eqnarray*}
Since
\[
\IP_p\bigl[\psi(G_n) = 1\bigr] \leq\IP_p
\bigl[W_1^2(p,G_n)+W_2^2(p,G_n)
> q_{1-\alpha}\bigr] = \alpha+ \bigo\bigl(n^{-1/2}\bigr),
\]
this test has an asymptotic significance level of $\alpha$ or less.
Whether the asymptotic significance level is strictly less than or
equal to $\alpha$ depends on the asymptotic behaviour of the quantity
\[
\inf_{0<p<1}\bigl\{W_1^2(p,G_n)+W_2^2(p,G_n)
\bigr\},
\]
which cannot be expected to have a $\chi^2$-distribution. Numerical
simulations indicate that the asymptotic significance level of $\psi$
is strictly less than $\alpha$, but a mathematical proof of this
observation eludes us.
\end{remark}

Before we prove Theorem~\ref{thm2}, we need some notation and technical
lemmas. For the remainder of this subsection, that is until the end of
the proof of Theorem~\ref{thm2}, we will follow the convention that the
elements in an ordered $m$-tuple $(i_1,\dots,i_m)$ of integers are
pairwise different and range from $1$ to $n$, and we will assume the
same for sets written as $\{i_1,\dots,i_m\}$, so that $\llvert \{
i_1,\dots,i_m\}\rrvert  = m$ always. For every $(i,j,k,l)$ let
\[
\eta_{ijkl}=I_{ij} I_{jk} I_{kl}
I_{il}-p^3( I_{ij}+I_{jk}+I_{kl}+
I_{il})+3p^4,
\]
where $I_{ij} = I_{ji}$ is the indicator of the event that there is an edge
connecting $i$ and $j$. Note that between every set of four vertices $\{
i,j,k,l\}$, only three essentially different $4$-cycles can be spanned,
so that, for example, the set of eight $4$-tuples
\begin{eqnarray*}
&& \bigl\{(i,j,k,l),(j,k,l,i),(k,l,i,j),(l,i,j,k),
\\
&&\hphantom{\bigl\{}(i,l,k,j),(l,k,j,i),(k,j,i,l),(j,i,l,k)\bigr\}
\end{eqnarray*}
represent the same $4$-cycle, and hence
\begin{eqnarray*}
 \eta_{ijkl} &=& \eta_{jkli}= \eta_{klij}=
\eta_{lijk}
\\
& =& \eta_{ilkj} = \eta_{lkji} = \eta_{kjil} =
\eta_{jilk}.
\end{eqnarray*}
It is also straightforward to verify that, if $\cV\subset\{1,\dots,n\}$
is of arbitrary size, then
%
%e3.5 #&#
\begin{equation}
\label{10} \E\bigl\{\eta_{ijkl}|(I_{uv})_{u,v\in\cV}
\bigr\}=0
\end{equation}
for any $(i,j,k,l)$ with $\llvert \{i,j,k,l\}\cap\cV\rrvert
\leq2$. From \eq
{10}, we can easily deduce statements about mixed moments. For example,
for any $(i,j,k,l)$ and any $(u,v)$, we have
\[
\E\{\eta_{ijkl} I_{uv}\}=0,
\]
or, if $|\{i,j,k,l\} \cap\{u,v,w,m\}|\leq2$, we have
\[
\E\{\eta_{ijkl} \eta_{uvwm}\}=0.
\]
Whenever we will be using such identities (or similar identities with
more factors) in the proof, we will only refer to \eq{10}, since
obtaining these covariance formulas from \eq{10} is straightforward.

For each $\nu= \{i,j,k,l\}$, let
\begin{eqnarray*}
X_{1,\nu} & =& \frac{1}{{n-2\choose2}\sigma
_1}(I_{ij}+I_{ik}+I_{il}+I_{jk}+I_{jl}+I_{kl}
- 6p),
\\
X_{2,\nu} & = &\frac{1}{\sigma_2}(\eta_{ijkl} +
\eta_{ijlk} + \eta_{ikjl}),
\end{eqnarray*}
and $X_\nu=(X_{1,\nu}, X_{2,\nu})^t$. Now we can represent $W$ as a
sum of
locally dependent random vectors, namely
%
%e3.6 #&#
\begin{equation}
\label{12} W = \sum_{\nu} X_{\nu},
\end{equation}
where the sum ranges over all subsets $\nu=\{i,j,k,l\}$. To see
that \eq{12} is the same as in Theorem~\ref{thm2}, recall that between
each set of four vertices $\{i,j,k,l\}$, there can be at most three
different $4$-cycles, and that, in the definition of $X_{2,\nu}$, one
representative of each of them is picked. Furthermore, each edge
$I_{ij}$ is
over-counted ${n-2\choose2}$ times, hence the additional factor ${n-2
\choose2}^{-1}$ in the definition of $X_{1,\nu}$. It is
straightforward to check that
\[
\E W=0,\qquad\IE\bigl\{WW^t\bigr\} = \Id_2,
\]
where $\Id_m$ is the $m$-dimensional unit matrix. Note that $X_\nu$ and
$X_\xi$ are independent whenever $\llvert \nu\cap\xi\rrvert  \leq1$, that is,
share at
most one vertex. Hence, for each $\nu$, we define the set
$A_\nu:=\{\xi\dvtx  |\nu\cap\xi|\geq2\}$, the `neighbourhood'
of $X_\nu
$. For given
$\nu$, we then have that the collection $(X_\xi)_{\xi\notin A_\nu}$ is
independent of
$X_\nu$. Therefore, if $I$ is uniformly distributed over all $\nu$,
%
%e3.7 #&#
\begin{equation}
\label{13} \bigl(W,W',G\bigr):=\biggl(W, W-\sum
_{\nu\in A_I} X_\nu, -{n \choose4}X_I
\biggr)
\end{equation}
is a Stein coupling (cf. Section~\ref{sec5}).

Since the sequence $(G_n)$ starts at some unspecified integer $n_0$, we
can assume without loss of generality that $n_0\geq3$, and, hence, use
$G_1$ and $G_2$ to denote the first, respectively, second component of
the vector $G$, rather than elements from the random graph sequence $(G_n)$.

\begin{pf*}{Proof of Theorem~\ref{thm2}}
We apply Theorem~\ref{thm1} for the Stein coupling given in \eq{13}.
Let as usual
$D=W'-W$. In what follows, $C$ denotes a positive constant independent of
$p$ and $n$, possibly different from line to line. Note first that
\[
\sigma_1^2\geq Cn^2p(1-p),\qquad
\sigma_2^2 \geq C n^5p^6(1-p)^2.
\]
Hence,
\[
\llvert X_\nu\rrvert \leq C\biggl(\frac{1}{n^2\sigma_1}+
\frac{1}{\sigma
_2}\biggr)\leq \frac{C}{n^{5/2}p^3(1-p)}
\]
and $\llvert A_\nu\rrvert \leq Cn^2$, which yields the upper bounds
%
%e3.8 #&#
\begin{equation}
\label{14} \llvert G\rrvert \leq\frac{Cn^{3/2}}{p^3(1-p)} =: \alpha,\qquad \llvert D
\rrvert \leq\frac{C}{p^3(1-p)n^{1/2}} =: \beta.
\end{equation}
The second moment of $\llvert D\rrvert $ can be calculated as follows. Noting that
$\llvert \xi\cap\xi'\rrvert \leq1$ implies $\IE(X_{1,\xi} X_{1,\xi'})=0$, and
$\llvert \xi\cap\xi'\rrvert \leq2$ implies $\IE(X_{2,\xi} X_{2,\xi'})=0$, which
follows from \eq{10}, we have
%
%e3.9 #&#
\begin{eqnarray}
\label{15} \E\llvert D\rrvert ^2 & =& \E D_1^2+
\E D_2^2\nonumber
\\
& =& \frac{1}{{n \choose4}}\sum_{\nu}\sum
_{\xi,\xi'\in
A_\nu}\IE(X_{1,\xi}X_{1,\xi'}) +
\frac{1}{{n \choose4}}\sum_{\nu}\sum
_{\xi,\xi'\in
A_\nu}\IE(X_{2,\xi} X_{2,\xi'})\nonumber\\[-8pt]\\[-8pt]
& \leq&\frac{C}{n^4}\times n^4 \times n^2 \times
n^2 \times\frac
{1}{n^4 \sigma_1^2} + \frac{C}{n^4} \times
n^4 \times n^2 \times n\times \frac{1}{\sigma_2^2}\nonumber
\\
& \leq&\frac{C}{n^2 p^6 (1-p)^2}.\nonumber
\end{eqnarray}
Define the $\sigma$-field $\mathcal{F} = \sigma(G_n)$. Clearly,
$\mathcal{F}
\supset\sigma(W)$. In the following, we calculate the variances of the
conditional expectations in the bound \eq{6}. First,
%
%e3.10 #&#
\begin{eqnarray}
\label{200} \Var\bigl(\E^\mathcal{F} G_1 D_1
\bigr) & \leq&\Var\biggl(\sum_\nu X_{1,\nu}
\sum_{\xi\in A_\nu} X_{1,\nu}\biggr)\nonumber
\\[-8pt]\\[-8pt]
& =& \sum_{\nu, \nu'} \sum_{\xi\in A_\nu, \xi'\in
A_{\nu'}}
\Cov(X_{1,\nu}X_{1,\xi}, X_{1,\nu'} X_{1,\xi'}) \leq
\frac{Cn^{10}}{n^8 \sigma_1^4},\nonumber
\end{eqnarray}
where the last inequality follows from the fact that $\Cov(X_{1,\nu
}X_{1,\xi},
X_{1,\nu'} X_{1,\xi'})\neq0$ can occur only if $\llvert (\nu
\cup\xi
)\cap
(\nu'\cup\xi')\rrvert  \geq2$. By the same argument,
%
%e3.11 #&#
\begin{equation}
\label{201} \Var\bigl(\E^\mathcal{F} G_1 D_2
\bigr) \leq\sum_{\nu, \nu'} \sum
_{\xi\in A_\nu,
\xi'\in A_{\nu'}}\Cov(X_{1,\nu}X_{2,\xi},
X_{1,\nu'} X_{2,\xi'}) \leq\frac{Cn^{10}}{n^4 \sigma_1^2 \sigma_2^2}
\end{equation}
and
%
%e3.12 #&#
\begin{equation}
\label{202} \Var\bigl(\E^\mathcal{F} G_2 D_1
\bigr)\leq\frac{Cn^{10}}{n^4 \sigma_1^2
\sigma_2^2}.
\end{equation}
In order to bound $\Var(\E^\mathcal{F} G_2 D_2)$, we argue that
%
%e3.13 #&#
\begin{equation}
\label{123} \Cov(X_{2,\nu}X_{2,\xi}, X_{2,\nu'}
X_{2,\xi'}) \neq0 \qquad\mbox{implies }\bigl\llvert \nu\cup\xi\cup
\nu'\cup\xi'\bigr\rrvert \leq9,
\end{equation}
from which we can deduce that
%
%e3.14 #&#
\begin{equation}
\label{203} \Var\bigl(\E^\mathcal{F} G_2 D_2
\bigr)\leq\frac{Cn^{9}}{\sigma_2^4}.
\end{equation}
To show \eq{123}, note that the left-hand side implies that
\begin{enumerate}[(ii)]
\item[(i)] any intersection of $\nu$, $\xi$, $\nu'$ or $\xi'$ with the
union of the other three sets has at least three elements (otherwise we
would obtain a contradiction with \eq{10}), and
\item[(ii)] at least one of the intersections $\nu\cap\nu'$, $\nu
\cap\xi
'$, $\xi\cap\nu'$ and $\xi\cap\xi'$ has at least two elements
(otherwise $X_{1,\nu'}X_{1,\xi'}$ and $X_{1,\nu'}X_{1,\xi'}$ would be
independent).
\end{enumerate}
Assume now that the left-hand side of \eq{123} is true. Since $\xi\in
A_\nu$, we have $\llvert \nu\cap\xi\rrvert  \geq2$, and
hence $\llvert \nu
\cup\xi
\rrvert  \leq6$, and similarly $\llvert \nu'\cup\xi'\rrvert  \leq6$. Using (ii), we
deduce that one of the three inequalities $\llvert (\nu\cup\xi
)\cap
\nu
'\rrvert  \geq2$, $\llvert (\nu\cup\xi)\cap\xi'\rrvert  \geq2$, or $\llvert (\nu
'\cup\xi
')\cap\nu\rrvert  \geq2$ must hold. If the first inequality
holds, we obtain
$\llvert \nu\cup\xi\cup\nu'\rrvert  \leq8$, and, using
(i), that $\llvert \nu\cup
\xi\cup\nu'\cup\xi'\rrvert  \leq9$; the other two
inequalities are analogous.
This concludes the proof of \eq{123}.

Collecting the bounds \eq{200}, \eq{201}, \eq{202} and \eq{203}, we obtain
\[
B_2\leq\frac{C}{n^{1/2}p^{6}(1-p)^2}.
\]
By similar arguments,
\[
\Var\E^\mathcal{F} \bigl(D_1^2\bigr)\leq
\frac{C}{n^2 \sigma_1^4},\qquad \Var\E^\mathcal{F} \bigl(D_2^2
\bigr)\leq\frac{Cn^5}{\sigma_2^4},
\]
and, hence,
\[
B_1\leq\frac{C}{n^{5/2}p^{6}(1-p)^2}.
\]

The following bounds can be obtained in a similar fashion, again
using \eq{10}, but we omit the tedious details. We have
\begin{eqnarray*}
\Var\E^\mathcal{F} \bigl(G_1 D_1^2
\bigr)&\leq&\frac{Cn^{14}}{n^{12} \sigma
_1^6}, \qquad\Var\E^\mathcal{F} (G_1
D_1 D_2)\leq\frac{Cn^{14}}{n^8 \sigma_1^4 \sigma_2^2},
\\
\Var\E^\mathcal{F} \bigl(G_1 D_2^2
\bigr)&\leq&\frac{Cn^{13}}{n^4 \sigma_1^2
\sigma_2^4},\qquad \Var\E^\mathcal{F} \bigl(G_2
D_1^2\bigr)\leq\frac{Cn^{14}}{n^8 \sigma_1^4
\sigma_2^2},
\\
\Var\E^\mathcal{F} (G_2 D_1 D_2)&\leq&
\frac{Cn^{13}}{n^4 \sigma_1^2
\sigma_2^4}, \qquad\Var\E^\mathcal{F} \bigl(G_2
D_2^2\bigr)\leq\frac{Cn^{13}}{\sigma_2^6},
\end{eqnarray*}
and therefore
\[
B_3 \leq\frac{C}{np^9(1-p)^{3}}.
\]
Collecting the bounds on $B_1$, $B_2$ and $B_3$, in combination with
\eq{14} and \eq{15}, yields the final estimate via \eq{6}.
\end{pf*}

%s3.2 #&#
\subsection{Joint normality of certain permutation statistics}

Let $M$ be a real $n\times n$ matrix and assume that $M$ is anti-symmetric,
that is, for each $u,v\in\{1,\dots,n\}$, we have
\[
M_{uv} = - M_{vu}.
\]
Note that $M_{uu}=0$.
Let $\pi$ be a permutation of size $n$, chosen uniformly at random, and
consider the statistic
%
%e3.15 #&#
\begin{equation}
\label{16} W = \sum_{i<j} M_{\pi(i)\pi(j)}.
\end{equation}
Here, sums of the form $\sum_{i<j}$ have to be interpreted as double
sums $\sum_{i=1}^{n-1}\sum_{j=i+1}^n$. If it is to be interpreted as a
single sum, we will explicitly state the summation index using the
notation $\sum_{j:i<j}$.\vspace*{2pt}

Permutation statistics of the form \eq{16} were considered by Fulman \cite{Fulman2004} and they
are a special case of doubly-indexed permutation statistics
%
%e3.16 #&#
\begin{equation}
\label{17} \sum_{i,j} a\bigl(i,j,\pi(i),\pi(j)
\bigr)
\end{equation}
with
\[
a(i,j,u,v) = \I[i<j] M_{uv}.
\]
The reason to study \eq{16} is that two important properties of permutations,
the number of descents and inversions, can be readily represented in
this form.
Choosing $M_{u,u+1}=-1$ and $M_{uv} = 0$ for all other $v>u$ (for
$v<u$, $M_{uv}$ is defined via anti-symmetry), \eq{16} becomes
$2\Des(\pi^{-1})-(n-1)$, where $\Des(\pi)$ is the number of descents
of $\pi$;
with $M_{uv} = -1$ for all $u<v$, \eq{16}
becomes $2\Inv(\pi^{-1}) - {n\choose2}$, where $\Inv(\pi)$ is the
number of
inversions of $\pi$.

Using Stein's method, Zhao, Bai, Chao and Liang \cite{Zhao1997} prove a general Berry--Esseen
type theorem
for sums of the form \eq{17}, but their results do not apply to the
number of
descents $\Des(\pi)$, which seems to be ``too sparse''. In contrast,
using a
special exchangeable pair, Fulman \cite{Fulman2004} was able to obtain a rate of
convergence of $n^{-1/2}$ for the Kolmogorov metric for both, the
number of
descents and inversions.

We shall extend Fulman's results to the multivariate setting. Furthermore,
we are able to remove a certain condition on $M$ (present in Fulman's work),
arising from the requirement of exchangeability; cf. Remark~\ref{rem2}.
In addition to extending the exchangeable pair approach by Fulman \cite{Fulman2004}, we also provide a result using the local approach.

Let $M^{(1)},\dots,M^{(d)}$ be a sequence of real $n\times n$ matrices
and assume that
each matrix is anti-symmetric.
For each $r$, define $W_r=\sum_{i<j} M^{(r)}_{\pi(i) \pi(j)}$. As in
Fulman \cite{Fulman2004}, define
\[
A_u^{(r)}=\sum_{v:v>u}
M^{(r)}_{uv},\qquad B^{(r)}_u=\sum
_{v:v<u} M^{(r)}_{vu}.
\]
The mean and
covariances of $W=(W_1,\dots,W_d)$ are given in the following lemma.

%le3.5 #&#
\begin{lemma}\label{lem2}
We have $\E W=0$ and
%
%e3.17 #&#
\begin{equation}
\label{108} \Cov(W_r, W_s)=\frac{1}{3}\biggl(
\sum_{u<v} M^{(r)}_{uv}
M^{(s)}_{uv} +\sum_{u}
\bigl(A^{(r)}_u-B^{(r)}_u\bigr)
\bigl(A^{(s)}_u-B^{(s)}_u\bigr)\biggr).
\end{equation}
\end{lemma}

\begin{pf}
Both the covariance and the right-hand side of \eq{108} are symmetric
bilinear forms on the vector space of all anti-symmetric matrices.
Moreover, by Lemma~4.3.1 of Fulman \cite{Fulman2004}, both expressions match
for $M^{(r)}=M^{(s)}$. Since a symmetric bilinear form is uniquely
determined by the corresponding quadratic form, the results follows.
\end{pf}

With $W=(W_1,\dots, W_d)^t$, we have the following result.

%th3.6 #&#
\begin{theorem}\label{thm3}
Let $W$ be as above and let
%
%e3.18 #&#
\begin{equation}
\label{18} \beta=\sup_{r, u} \sum
_{v} \bigl\llvert M^{(r)}_{uv}\bigr
\rrvert,\qquad\beta_2=\sup_{r, u} \sum
_{v} \bigl( M^{(r)}_{uv}
\bigr)^2.
\end{equation}
Assume $\Var(W_r)=1$ for each $1\leq r\leq d$. Then, with $\Sigma
=\Cov
(W)$, there is a positive constant $C_d$ depending only
on $d$, such that
%
%e3.19 #&#
\begin{equation}
\label{109} d_c\bigl(\law( W), \law\bigl(\Sigma^{1/2} Z
\bigr)\bigr)\leq C_d \bigl\llVert \Sigma^{-1/2}\bigr\rrVert
_2^2 \bigl(n \beta ^3+n^{1/2}\beta
\beta _2^{1/4} \bigr).
\end{equation}
\end{theorem}

Although Theorem~\ref{thm3} is widely applicable, it does not yield
optimal bounds for the applications discussed below. To this end, we
also give a theorem that gives better bounds under the more specific
situation where the non-zero entries of $M^{(r)}$ are all near the
diagonal and $W_1$ is the normalised number of inversions.

%th3.7 #&#
\begin{theorem}\label{thm37} Assume the situation of Theorem~\ref
{thm3}. In addition, assume that $W_1$ is of the specific form
\[
W_1=\frac{\Inv(\pi)-\fracd{1}{2}{n \choose2}}{\sqrt{\fracb{n(n-1)(2n+5)}{72}}},
\]
where $\Inv(\pi)$ is the number of inversions of $\pi$. Assume
further that there is a positive integer $m$ such that
\[
M^{(r)}_{uv}=0,\qquad\mbox{if $\llvert u-v\rrvert >m$ and $2\leq
r\leq d$.}
\]
Then
%
%e3.20 #&#
\begin{equation}
\label{111} d_c\bigl(\law( W), \law\bigl(\Sigma^{1/2} Z
\bigr)\bigr)\leq C_{d,m} \bigl\llVert \Sigma^{-1/2}\bigr\rrVert
_2^2 n \beta^3,
\end{equation}
where $C_{d,m}$ is a positive constant depending only on $d$ and $m$,
and where
\[
\beta:=\max\biggl\{ \frac{1}{\sqrt{n}}, \sup_{r, u} \sum
_{v} \bigl\llvert M^{(r)}_{uv}
\bigr\rrvert \biggr\}.
\]
\end{theorem}

%re3.4 #&#
\begin{remark}\label{rem34}
We will use Corollary~\ref{cor1} to prove \eq{109} and \eq{111}.
The bounds in \eq{109} and \eq{111} have fewer terms than the bound
in \eq{7} because we will make use of the inequality
$d\leq\alpha\beta$ for $\alpha$ and $\beta$ defined in \eq{5}, which
follows from \eq{106} and the assumption that $\Var(W_r)=1$ for each $r$.
\end{remark}

As a corollary of Theorem~\ref{thm37}, we prove the joint asymptotic
normality of the number of descents and inversions of $\pi$; the rate obtained
is best possible.

%co3.8 #&#
\begin{corollary}\label{cor2}
Let $\Des(\pi)$ and $\Inv(\pi)$ be the number of descents and
inversions of
$\pi$, and let
\[
W=(W_1, W_2)^t=\biggl(\frac{\Inv(\pi)-\fracd{1}{2}{n \choose2}}{\sqrt
{\fracb{n(n-1)(2n+5)}{72}}},
\frac{\Des(\pi)-\fracb{n-1}{2}}{\sqrt{\fracb{n+1}{12}}} \biggr)^t.
\]
Then
\[
d_c\bigl(\law( W), \law( Z) \bigr)\leq\frac{C}{\sqrt{n}}
\]
for some absolute constant $C$, where $Z$ is a $2$-dimensional standard Gaussian
vector.
\end{corollary}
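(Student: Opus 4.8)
The plan is to derive Corollary~\ref{cor2} from Theorem~\ref{thm3} with $d=2$, after casting $W$ into the setting of Section~\ref{sec3}. Since $\pi\mapsto\pi^{-1}$ is a measure-preserving bijection of the uniform random permutation, the joint law of $(\Des(\pi),\Inv(\pi))$ equals that of $(\Des(\pi^{-1}),\Inv(\pi^{-1}))$, so it suffices to bound $d_c$ for the vector built from $\Des(\pi^{-1})$ and $\Inv(\pi^{-1})$. As recalled in the text, these admit anti-symmetric matrix representations: $M^1_{i,i+1}=-1$ with all other above-diagonal entries zero gives $\sum_{i<j}M^1_{\pi(i)\pi(j)}=2\Des(\pi^{-1})-(n-1)$, while $M^2_{ij}=-1$ for all $i<j$ gives $\sum_{i<j}M^2_{\pi(i)\pi(j)}=2\Inv(\pi^{-1})-{n\choose 2}$. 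Lemma~\ref{lem2} with $r=s$ yields the variances $(n+1)/3$ and $n(n-1)(2n+5)/18$ of these two sums; dividing each $M^r$ by the corresponding standard deviation gives anti-symmetric matrices whose associated unit-variance statistics $W_1,W_2$ are exactly the two coordinates of the target vector (with $\pi$ replaced by $\pi^{-1}$ throughout).

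I would then bound the two inputs to Theorem~\ref{thm3}. For $\beta$ in~\eqref{18}: the normalised descent matrix has at most two nonzero entries per row, each of size $\Theta(n^{-1/2})$; the normalised inversion matrix is dense, but each of its entries has size $\Theta(n^{-3/2})$ (the relevant normalisation being of order $n^{3/2}$), so each row sum is still $\Theta(n^{-1/2})$; hence $\beta=\bigo(n^{-1/2})$. For $\Sigma=\Cov(W)$: the diagonal entries are $1$ by construction, while Lemma~\ref{lem2} with $r=1$, $s=2$ produces a raw cross-covariance equal to $n-1$, which against normalisations of orders $n^{1/2}$ and $n^{3/2}$ gives $\Cov(W_1,W_2)=\bigo(n^{-1})$. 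Therefore $\Sigma\to\Id_2$ as $n\toinf$, so there is $n_0$ with $\Sigma$ positive definite and $\norm{\Sigma^{-1/2}}_2,\norm{\Sigma^{-1/2}}_\infty\leq 2$ for all $n\geq n_0$.

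For $n\geq n_0$, Theorem~\ref{thm3} then gives
\bes{
  d_c\bklr{\law(W),\law(\Sigma^{1/2}Z)}
   & \leq C_2\bklr{\norm{\Sigma^{-1/2}}_2^2+\norm{\Sigma^{-1/2}}_\infty^2}\,n\,\beta^3\\
   & = \bigo\bklr{n\cdot n^{-3/2}} = \bigo\bklr{n^{-1/2}}.
}
To conclude I would replace $\Sigma^{1/2}Z$ by $Z$: since $\norm{\Sigma-\Id_2}=\bigo(n^{-1})$ and the eigenvalues of $\Sigma$ are bounded away from $0$ and $\infty$, a standard comparison of two non-degenerate centred bivariate Gaussians over convex sets gives $d_c(\law(\Sigma^{1/2}Z),\law(Z))=\bigo(n^{-1})$, which is of lower order. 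The triangle inequality then yields the stated bound for $n\geq n_0$, and for the finitely many $n<n_0$ the bound $d_c\leq 1$ holds trivially after enlarging the constant. All the computations are elementary; the only steps that need a little care are checking that the cross-covariance really decays like $n^{-1}$ (so that the limiting covariance is $\Id_2$ and not some other non-singular matrix) and the Gaussian-to-Gaussian estimate, which, being $\bigo(n^{-1})$, does not affect the final rate — so there is no genuine obstacle here, only bookkeeping.
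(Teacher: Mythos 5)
Your proof is correct and takes essentially the same route as the paper: the same choice of anti-symmetric matrices $M^1, M^2$, verification that $\Var(W_1)=\Var(W_2)=1$ with $|\Cov(W_1,W_2)|\leq C/n$ and $\beta=\bigo(n^{-1/2})$, followed by an application of Theorem~\ref{thm3}. You are slightly more careful than the paper in making explicit two bookkeeping steps it treats implicitly, namely the distributional identification via $\pi\mapsto\pi^{-1}$ and the final Gaussian-to-Gaussian comparison needed to replace $\law(\Sigma^{1/2}Z)$ by $\law(Z)$.
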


\begin{pf}
Set
\[
M^{(1)}_{uv}= \sqrt{\frac{18}{n(n-1)(2n+5)}} \times \cases{ -1&\quad
if $v>u$,
\cr
+1&\quad if $v<u$,
\cr
0 &\quad otherwise, }
\]
and set
\[
M^{(2)}_{uv}= \sqrt{\frac{3}{n+1}} \times \cases{ - 1&\quad
if $v=u+1$,
\cr
+ 1& \quad if $v=u-1$,
\cr
0 &\quad otherwise. }
\]
Hence, we can take $m=1$ in Theorem~\ref{thm37}.
Let $\tau=\pi^{-1}$, which is again a uniform random permutation of
size $n$.
It can be easily verified that $W_1=\sum_{i<j} M^{(1)}_{\tau(i)\tau
(j)}$ and\vspace*{2pt}
$W_2=\sum_{i<j} M^{(2)}_{\tau(i)\tau(j)}$. From Lemma~\ref{lem2},
$\Var(W_1)=\Var(W_2)=1$ and $\llvert \Cov(W_1,W_2)\rrvert \leq C/n$. Moreover,
$\beta
$ as
defined in \eq{18} is smaller than $C/\sqrt{n}$. Therefore, the
corollary is
proved by applying Theorem~\ref{thm37}.
\end{pf}

To prove Theorem~\ref{thm3}, we need the following lemma, the proof of
which is straightforward and therefore omitted.

%le3.9 #&#
\begin{lemma}\label{lem3}
For $1\leq r, s, t \leq d$ and $\beta$ defined in \eq{18}, we
have
%
%e3.21 #&#
%e3.22 #&#
%e3.23 #&#
%e3.24 #&#
\begin{eqnarray}
\label{19} \sum_{u_1,\dots,u_6}\bigl\llvert
M^{(r)}_{u_1u_2}M^{(s)}_{u_1u_3}M^{(r)}_{u_4u_5}M^{(s)}_{u_4u_6}
\bigr\rrvert &\leq& n^2\beta^4,
\\
\label{20} \mathop{ \sum_{\llvert \{u_1,u_2,u_3\}\rrvert =3, \llvert \{u_4,u_5,u_6\}\rrvert =3,}}\limits
_{\llvert \{u_1,\dots,u_6\}\rrvert  \leq5} \bigl
\llvert M^{(r)}_{u_1
u_2}M^{(s)}_{u_1u_3}M^{(r)}_{u_4u_5}M^{(s)}_{u_4u_6}
\bigr\rrvert &\leq&9 n \beta ^4,
\\
\label{21} \sum_{u_1,\dots,u_8}\bigl\llvert
M^{(r)}_{u_1u_2}M^{(s)}_{u_1u_3}
M^{(t)}_{u_1u_4}M^{(r)}_{u_5u_6}M^{(s)}_{u_5u_7}M^{(t)}_{u_5u_8}
\bigr\rrvert &\leq& n^2\beta^6,
\\
\label{22} \sum_{\llvert \{u_1,\dots,u_8\}\rrvert  \leq
7}\bigl\llvert
M^{(r)}_{u_1u_2}M^{(s)}_{u_1u_3}
M^{(t)}_{u_1u_4}M^{(r)}_{u_5u_6}M^{(s)}_{u_5u_7}M^{(t)}_{u_5u_8}
\bigr\rrvert &\leq& 22 n^2 \beta^4 \beta_2,
\end{eqnarray}
where $\sum_{\llvert \{u_1,\dots,u_k\}\rrvert  \leq k-1}$
stands for summation
over all
tuples $(u_1,\dots,u_k)$ for which at least two components are equal.
\end{lemma}

\begin{pf*}{Proof of Theorem~\ref{thm3}}
We adopt the construction of $W'$ from Fulman \cite{Fulman2004}. Let $I$ be uniformly
chosen from $\{1,\dots,n\}$ and independently of $\pi$. Given $I$, we define
$\pi'$ as
$\pi\circ(I,I+1,\dots, n)$ where $(I,I+1,\dots, n)$ denotes the mapping
$I\mapsto I+1 \mapsto\cdots\mapsto n\mapsto I$, while keeping the rest
identical. As $\pi$ and $\pi'$ both are uniformly distributed, $W$ and
$W'$ have
the same marginal distribution (but are not necessarily exchangeable).
Fulman \cite{Fulman2004} showed that with $\lambda=2/n$
\[
\E^{\pi} \bigl(W'-W\bigr)=-\lambda W.
\]
Following Remark~\ref{rem2}, the bound \eq{7} holds with $D=W'-W$
and $G=\frac{1}{2}\lambda^{-1}D=nD/4$ (cf. Section~\ref{sec5}).
From the construction of $W'$, we have (cf. Lemma~4.2.1 of Fulman \cite{Fulman2004})
\[
D_r=-2\sum_{j: j>I} M_{\pi{(I)}\pi{(j)}}^{(r)}
\]
for $r\in\{1,\dots,d\}$.
By the definition of $\beta$ in \eq{18},
%
%e3.25 #&#
\begin{equation}
\label{23} \llvert G\rrvert \le C_d n \beta, \qquad\llvert D\rrvert \le
C_d \beta.
\end{equation}
We first prove that
%
%e3.26 #&#
\begin{equation}
\label{24} \Var\E^\pi( D_{r} D_{s})\leq
\frac{C_d\beta^4}{n}.
\end{equation}
From the construction of $W'$,
\begin{eqnarray*}
&&\Var\E^\pi(D_{r} D_{s})
\\
&&\quad= \Var\Biggl( \frac{4}{n}\sum_{i=1}^n
\mathop{\sum_{j_1,j_2: }}\limits
_{ j_1,j_2>i} M^{(r)}_{\pi(i)\pi
(j_1)}M^{(s)}_{\pi(i)\pi(j_2)}
\Biggr)
\\
&&\quad=\frac{16}{n^2}\Var\Biggl( \sum_{i=1}^n
\sum_{j: j>i} M^{(r)}_{\pi(i)
\pi(j)}M^{(s)}_{\pi(i)\pi(j)}+
\sum_{i=1}^n \mathop{\sum
_{j_1,j_2:
}}\limits
_{ j_1,j_2>i, j_1\neq j_2} M^{(r)}_{\pi(i) \pi(j_1)}M^{(s)}_{\pi(i)\pi(j_2)}
\Biggr).
\end{eqnarray*}
Using antisymmetry, it is not difficult to see that the first double
sum in the last line is constant. Hence, we only
need to show that
%
%e3.27 #&#
\begin{eqnarray}
\label{25} &&\sum_{\llvert \{i,j_1,j_2\}\rrvert =3, \llvert \{k,l_1,l_2\}\rrvert =3 } \bigl\llvert
K^{(r,s)}_{ij_1j_2kl_1l_2}\bigr\rrvert\nonumber\\
&&\quad:=\sum_{\llvert \{i,j_1,j_2\}\rrvert =3, \llvert \{k,l_1,l_2\}\rrvert =3 } \bigl\llvert \Cov
\bigl(M^{(r)}_{\pi
(i) \pi(j_1)}M^{(s)}_{\pi(i)\pi(j_2)},
M^{(r)}_{\pi(k) \pi(l_1)}M^{(s)}_{\pi(k)\pi(l_2)}\bigr) \bigr
\rrvert \\
&&\quad\leq C_d n\beta^4.\nonumber
\end{eqnarray}
We consider the cases $\llvert \{i,j_1,j_2,k,l_1,l_2\}\rrvert =6$ and $\llvert \{
i,j_1,j_2,k,l_1,l_2\}\rrvert \leq5$ separately.
For the first case, we have
\begin{eqnarray*}
K^{(r,s)}_{ij_1j_2kl_1l_2}&=&\frac{1}{(n)_6}\sum
_{\llvert \{
u,v_1,v_2,w,z_1,z_2\}\rrvert =6} M^{(r,s)}_{uv_1v_2wz_1z_2} - \frac{1}{((n)_3)^2}
\mathop{\sum_{\llvert \{u,v_1,v_2\}\rrvert =3}}\limits
_{ \llvert \{
w,z_1,z_2\}\rrvert =3} M^{(r,s)}_{uv_1v_2wz_1z_2}
\\
&= &\biggl( \frac{1}{(n)_6}- \frac{1}{((n)_3)^2} \biggr) \sum
_{\llvert \{
u,v_1,v_2,w,z_1,z_2\}\rrvert =6} M^{(r,s)}_{uv_1v_2wz_1z_2}
\\
&&{}- \frac{1}{((n)_3)^2} \mathop{\sum_{\llvert \{u,v_1,v_2\}\rrvert =3, \llvert \{
w,z_1,z_2\}\rrvert =3 }}_{ \llvert \{u,v_1,v_2,w,z_1,z_2\}\rrvert \leq5}
M^{(r,s)}_{uv_1v_2wz_1z_2},
\end{eqnarray*}
where $
M^{(r,s)}_{uv_1v_2wz_1z_2}:=M^{(r)}_{uv_1}M^{(s)}_{uv_2}M^{(r)}_{wz_1}M^{(s)}_{wz_2}$.
By \eq{19} and \eq{20},
\[
\bigl\llvert K^{(r,s)}_{ij_1j_2kl_1l_2}\bigr\rrvert \leq C_d
\frac{\beta^4}{n^5}.
\]
Next, we consider the case $\llvert \{i,j_1,j_2,k,l_1,l_2\}\rrvert \leq5$. Let $\t
{\pi}$ be an independent copy
of $\pi$. Again by \eq{19} and \eq{20},
\begin{eqnarray*}
&&\sum_{\llvert \{i,j_1,j_2\}\rrvert =3, \llvert \{k,l_1,l_2\}\rrvert =3 \atop\llvert \{
i,j_1,j_2,k,l_1,l_2\}\rrvert \leq5 } \bigl\llvert K^{(r,s)}_{ij_1j_2kl_1l_2}
\bigr\rrvert
\\
&&\quad\leq\E \biggl[ \sum_{\llvert \{i,j_1,j_2\}\rrvert =3, \llvert \{k,l_1,l_2\}\rrvert =3 \atop\llvert \{
i,j_1,j_2,k,l_1,l_2\}\rrvert \leq5 } \bigl( \bigl\llvert
M^{(r,s)}_{\pi(i)\pi(j_1)\pi
(j_2)\pi(k)\pi(l_1)\pi(l_2)}\bigr\rrvert + \bigl\llvert M^{(r,s)}_{\pi(i)\pi(j_1)\pi
(j_2)\t{\pi
}(k)\t{\pi}(l_1)\t{\pi}(l_2)}
\bigr\rrvert \bigr) \biggr]
\\
&&\quad=\E \biggl[ \sum_{\llvert \{u,v_1,v_2\}\rrvert =3, \llvert \{w,z_1,z_2\}\rrvert =3} \bigl\llvert
M^{(r,s)}_{uv_1v_2wz_1z_2}\bigr\rrvert \bigl(\I \bigl(\bigl\llvert
\{u,v_1,v_2,w,z_1,z_2\}\bigr
\rrvert \leq5 \bigr)
\\
&&\hspace*{160pt}\qquad{}+\I \bigl(\bigl\llvert \pi^{-1}\bigl(\{u,v_1,v_2
\}\bigr) \cap\t{\pi}^{-1}\bigl(\{ w,z_1,z_2\}
\bigr) \bigr\rrvert \leq5 \bigr) \bigr) \biggr]
\\
&&\quad\leq C_d n \beta^4.
\end{eqnarray*}
Therefore, we have
proved
\eq{25}, and thus \eq{24}.
Again from the construction of $W'$, we can write
\[
\Var\E^\pi(D_r D_s D_t)=
\frac{64}{n^2}\sum_{j_1>i, j_2>i, j_3>i
\atop
l_1>k, l_2>k, l_3>k} K^{(r,s,t)}_{i j_1 j_2 j_3 k l_1 l_2 l_3},
\]
where
\begin{eqnarray*}
K^{(r,s,t)}_{i j_1 j_2 j_3 k l_1 l_2 l_3}
:=\Cov\bigl(M^{(r)}_{\pi(i) \pi(j_1)}M^{(s)}_{\pi(i)\pi
(j_2)}M^{(t)}_{\pi(i)\pi(j_3)},
M^{(r)}_{\pi(k) \pi(l_1)}M^{(s)}_{\pi(k)\pi(l_2)}M^{t}_{\pi(k)\pi(l_3)}
\bigr).
\end{eqnarray*}
By the same argument as for $K^{(r,s)}_{ij_1j_2kl_1l_2}$, and using
the
bounds \eq{21} and \eq{22} instead of \eq{19} and \eq{20}, we can prove
%
%e3.28 #&#
\begin{equation}
\label{26} \Var\E^\pi(D_{r} D_{s}
D_{t})\leq C_d \beta^4 \beta_2.
\end{equation}
Applying the bounds \eq{23}, \eq{24} and \eq{26} in \eq{7} and using
$1 \leq C_d n \beta^2$ by Remark~\ref{rem34} prove the theorem.
\end{pf*}

Next, we prove Theorem~\ref{thm37}.

\begin{pf*}{Proof of Theorem~\ref{thm37}}
Let $\tau=\pi^{-1}$.
From Diaconis \cite{Diaconis1988}, $W_1$ can be expressed as
\[
W_1=\sum_{u=1}^n
\frac{1}{\sqrt{\fracb{n(n-1)(2n+5)}{72}}} \biggl( \xi _u-\frac{n-u}{2} \biggr) =:\sum
_{u=1}^n X^{(1)}_u,
\]
where $\xi_1$ is the minimum number of pairwise adjacent
transpositions taking
$\tau(1)$ to the first position, $\xi_2$ is the minimum number of pairwise
adjacent transpositions taking $\tau(2)$ to the second position after the
first step is done, etc. Because $\tau$ is a uniform permutation, $\{
\xi
_1,\dots,
\xi_n\}$ are independent random variables with $\xi_u\sim\Uniform
\{0,\dots, n-u\}$ for $1\le u\le n$.
For $2\leq r\leq d$, by the assumption that $M^{(r)}_{uv}=0$ if $\llvert u-v\rrvert >m$,
\begin{eqnarray*}
W_r&=&\sum_{i<j} M^{(r)}_{\pi(i)\pi(j)}=
\sum_{u,v: \llvert u-v\rrvert \leq m\atop
\pi
^{-1}(u)<\pi^{-1}(v)} M^{(r)}_{uv}
\\
&=&\sum_{u=1}^n \biggl( \sum
_{v: \llvert u-v\rrvert \leq m}M^{(r)}_{uv} \I\bigl[\tau (u)<
\tau(v)\bigr] \biggr) =:\sum_{u=1}^n
X^{(r)}_u.
\end{eqnarray*}
Let $X_u=(X^{(1)}_u,\dots,X^{(d)}_u)^t$. Then $W=\sum_{u=1}^n X_u$.
In the above pairwise transposition process, if we know $\{\xi_v\dvtx
1\leq
v\leq u+m \}$, then we can reconstruct the positions of $\{\tau(v)\dvtx
\llvert v-u\rrvert \leq m \}$. Observe that the relative order of $\{\tau(v)\dvtx
\llvert v-u\rrvert \leq m \}$ does not depend on $\{\xi_v\dvtx  1\leq v< u-m \}$.
Therefore, $X_u$ is measurable with respect to $\{\xi_v\dvtx  \llvert v-u\rrvert \leq m \}
$ and $W$ can be viewed as a sum of locally dependent random vectors
(cf. Section~\ref{sec8}) with neighbourhood $A_u=\{u-2m, u+2m\}$ for
each $1\leq u\leq n$. For the Stein coupling \eq{112}, we have\vspace*{-1pt}
\[
\llvert G\rrvert \leq C_{d,m}n\beta,\qquad\llvert D\rrvert \leq
C_{d,m}\beta.
\]
Moreover, by the local dependence structure,\vspace*{-1pt}
\begin{eqnarray*}
\Var\E^W(G_r D_s)&\leq&\Var\Biggl( \sum
_{u=1}^n \sum
_{v\in A_u} X^{(r)}_u X^{(s)}_v
\Biggr)
\\
&=& \sum_{u=1}^n \sum
_{w: \llvert w-u\rrvert \leq6m} \Cov\biggl( \sum_{v\in A_u}
X^{(r)}_u X^{(s)}_v, \sum
_{z\in A_w} X^{(r)}_w X^{(s)}_z
\biggr)
\\
&\leq& C_{d,m}n\beta^4,
\end{eqnarray*}
where we used the inequality $\Cov(X,Y)\leq(\E X^2+\E Y^2)/2$.
Similarly,\vspace*{-1pt}
\[
\Var\E^W(G_r D_s D_t)\leq
C_{d,m} n \beta^6.
\]
The bound \eq{111} is proved by applying the above bounds to \eq{7} and
using $1\leq C_{d,m} n \beta^2$ by Remark~\ref{rem34}.\vspace*{-1pt}
\end{pf*}

%s4 #&#
\section{Proof of main theorem} \label{sec4}

For given test function $h$, we consider the Stein equation\vspace*{-1pt}
%
%e4.1 #&#
\begin{equation}
\label{27} \D f(w)-w^t\nabla f(w)=h(w)-\E h(Z),\qquad w\in
\IR^d,
\end{equation}
where $\D$ denotes the Laplacian operator and $\nabla$ the gradient operator.
If $h$ is not continuous (like the indicator function of a convex set),
then $f$
is not smooth enough to apply Taylor expansion to the necessary degree, so
more refined techniques are necessary.

We follow the smoothing technique of Bentkus \cite{Bentkus2003}. Recall that
$\mathcal{A}$ is the collection of all convex sets in $\IR^d$. For
$A\in\mathcal{A}$, let $h_A(x)=I_A(x)$, and define the
smoothed function\vspace*{-1pt}
%
%e4.2 #&#
\begin{equation}
\label{28} h_{A,\eps}(w) = \psi\biggl(\frac{\dist(w, A)}{\eps}\biggr),
\end{equation}
where $\dist(w,A) = \inf_{v\in A}\llvert  w-v\rrvert  $ and\vspace*{-1pt}
%
%e4.3 #&#
\begin{equation}
\psi(x)= \cases{ 1, &\quad $x<0$,
\cr
1-2x^2, &\quad $0\leq x<
\frac{1}{2}$,
\cr
2(1-x)^2, &\quad $\frac{1}{2} \leq x <1$,\vspace*{2pt}
\cr
0, &\quad $1\leq x$. }
\end{equation}
Define also\vspace*{-1pt}
\[
A^\eps= \bigl\{x\in\IR^d \dvtx  \dist(x,A)\leq\eps\bigr\},\qquad
A^{-\eps} = \bigl\{x\in A \dvtx  \dist\bigl(x,\IR^d\setminus A
\bigr) > \eps\bigr\}
\]
(note that in general $(A^{-\eps})^\eps\neq A$).

%le4.1 #&#
\begin{lemma}[(Lemma~2.3 of Bentkus \cite{Bentkus2003})] The
function $h_{A,\eps}$ as defined above
has the following properties:
%
%e4.4 #&#
%e4.5 #&#
%e4.6 #&#
%e4.7 #&#
%e4.8 #&#
\begin{eqnarray}\label{29}
\mbox{\emph{(i)}}&& h_{A,\eps}(w)=1\qquad \mbox{for all }w\in A,
\\
\label{30}
\mbox{\emph{(ii)}}&& h_{A,\eps}(w)=0\qquad \mbox{for all }w\in\IR^d\setminus
A^\eps,
\\
\label{31}
\mbox{\emph{(iii)}}&& 0\leq h_{A,\eps}(w) \leq1\qquad \mbox{for all }w\in A^\eps
\setminus A,
\\
\label{32}
\mbox{\emph{(iv)}}&& \bigl\llvert \nabla h_{A,\eps} (w)\bigr\rrvert \leq2
\eps^{-1}\qquad \mbox{for all }w\in\IR^d,
\\
\label{33}
\mbox{\emph{(v)}} && \bigl\llvert \nabla h_{A,\eps} (v)-\nabla h_{A,\eps}
(w)\bigr\rrvert \leq8\llvert v-w\rrvert \eps^{-2}\qquad \mbox{for all }v,w\in
\mrr.
\end{eqnarray}
\end{lemma}

%le4.2 #&#
\begin{lemma}\label{lem4}
For any $d$-dimensional random vector $W$,
%
%e4.9 #&#
\begin{equation}
\label{34} d_c\bigl(\law(W), \law(Z)\bigr) \leq4d^{1/4}
\eps+ \sup_{A\in\mathcal{A}}\bigl\llvert \E h_{A,\eps}(W)-\E
h_{A,\eps
}(Z)\bigr\rrvert.
\end{equation}
\end{lemma}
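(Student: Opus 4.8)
The plan is to derive \eqref{34} from the standard smoothing/sandwiching argument, using only the pointwise properties \eqref{29}--\eqref{31} of $h_{A,\eps}$ together with a uniform bound on the Gaussian mass of a thin boundary shell of a convex set. Throughout write $\Delta:=\sup_{B\in\mathcal{A}}\babs{\E h_{B,\eps}(W)-\E h_{B,\eps}(Z)}$ for the second term on the right-hand side of \eqref{34}, and fix an arbitrary $A\in\mathcal{A}$. The first observation is that \eqref{29}--\eqref{31} say precisely that $I_A\le h_{A,\eps}\le I_{A^\eps}$ pointwise.

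From the left inequality and the definition of $\Delta$,
\[
  \IP(W\in A)\le\E h_{A,\eps}(W)\le\E h_{A,\eps}(Z)+\Delta\le\IP(Z\in A^\eps)+\Delta,
\]
so $\IP(W\in A)-\IP(Z\in A)\le\IP(Z\in A^\eps\setminus A)+\Delta$. For the reverse direction I would apply the same chain to the set $A^{-\eps}$, which is again convex (an intersection of translates of $A$) and hence admissible in the supremum defining $\Delta$; using $(A^{-\eps})^\eps\subseteq A$ to strengthen $h_{A^{-\eps},\eps}\le I_{(A^{-\eps})^\eps}$ into $h_{A^{-\eps},\eps}\le I_A$, one gets
\[
  \IP(Z\in A^{-\eps})\le\E h_{A^{-\eps},\eps}(Z)\le\E h_{A^{-\eps},\eps}(W)+\Delta\le\IP(W\in A)+\Delta,
\]
and therefore $\IP(Z\in A)-\IP(W\in A)\le\IP(Z\in A\setminus A^{-\eps})+\Delta$.

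It remains to bound the two exceptional probabilities, and both are controlled by the same fact. If $y\in A^\eps\setminus A$ or $y\in A\setminus A^{-\eps}$, then the straight segment joining $y$ to a suitable nearby point on the other side of $\partial A$ meets $\partial A$ within distance $\eps$ of $y$, so both sets are contained in the shell $S_\eps:=\{y\in\IR^d:d(y,\partial A)\le\eps\}$. The key input is that $\IP(Z\in S_\eps)\le4d^{1/4}\eps$ for every convex $A$ and every $\eps>0$: this is the bound on the Gaussian perimeter of convex bodies underlying the argument of \cite{Bentkus2003} (for $\eps$ not small the estimate is trivial, since $\IP(Z\in S_\eps)\le1\le4d^{1/4}\eps$ once $\eps\ge\tfrac14$). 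Combining the three displays yields $\babs{\IP(W\in A)-\IP(Z\in A)}\le4d^{1/4}\eps+\Delta$, and taking the supremum over $A\in\mathcal{A}$ gives \eqref{34}.

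The only genuinely non-elementary ingredient, and hence the step I expect to be the main obstacle, is the dimensional factor $d^{1/4}$ in the Gaussian mass of the boundary shell; everything else is bookkeeping with the explicit properties of $h_{A,\eps}$. In a careful write-up one would also first reduce to closed convex $A$ (for which $\IP(Z\in\partial A)=0$ and the set inclusions above hold without closure caveats), which is harmless since convex sets and their closures yield the same supremum in \eqref{1}.
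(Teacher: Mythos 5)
Your proof is correct and follows the same route the paper does (which dismisses the sandwiching step as a ``standard argument'' and cites \cite{Ball1993} and \cite{Bentkus2003} for the Gaussian boundary-shell estimate); you have simply written out the details of that standard argument, including the correct use of $I_A\le h_{A,\eps}\le I_{A^\eps}$, the convexity of $A^{-\eps}$, and the inclusion $(A^{-\eps})^\eps\subseteq A$.
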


\begin{pf} By (2.2) of Bentkus \cite{Bentkus2003}, for any $\eps>0$,
\begin{eqnarray*}
d_c\bigl(\law(W), \law(Z)\bigr) &\leq&\sup_{A\in\mathcal{A}}
\bigl\llvert \E h_{A,\eps}(W)-\E h_{A,\eps}(Z)\bigr\rrvert
\\
&&{}+\sup_{A\in\mathcal{A}} \max\bigl\{\P\bigl(Z\in A^\eps\setminus
A\bigr),\P \bigl(Z\in A\setminus A^{-\eps}\bigr)\bigr\}.
\end{eqnarray*}
From Ball \cite{Ball1993} and Bentkus \cite{Bentkus2003}, we have
%
%e4.10 #&#
\begin{equation}
\label{35} \sup_{A\in\mathcal{A}}\max\bigl\{\P\bigl(Z\in
A^\eps\setminus A\bigr),\P\bigl(Z\in A\setminus A^{-\eps}\bigr)
\bigr\} \leq4d^{1/4} \eps
\end{equation}
(the dependence on $d$ in \eq{34} is optimal; see Bentkus \cite{Bentkus2003}).
\end{pf}

Fix now $\eps$ and a convex $A\subset\IR^d$. It can be verified directly
that with
\[
g_{A,\eps}(w,s)=-\frac{1}{2(1-s)} \int_{\IR^d}
\bigl[h_{A,\eps}(\sqrt{1-s}w+\sqrt{s}z)-\E h_{A,\eps} (Z)\bigr]
\phi(z)\,\mathrm{d}z,
\]
a solution to \eq{27} is (cf. G{\"o}tze \cite{Goetze1991})
%
%e4.11 #&#
\begin{equation}
\label{103} f_{A,\eps}(w)=\int_0^1
g_{A, \eps}(w,s)\,\mathrm{d}s,
\end{equation}
where $\phi$ is the density function of the
$d$-dimensional standard normal
distribution. In what follows, we keep the dependence on $A$ and $\eps$
implicit and write $g=g_{A, \eps}, f=f_{A,\eps}$ and $h=h_{A,\eps}$.
For real-valued functions on $\IR^d$ we
will write $f_i(x)$ for $\partial f(x)/\partial x_i$, $f_{ij}(x)$ for
$\partial^2 f(x)/(\partial x_i\,\partial x_j)$ and so forth. Also we
write $g_i(w,s)=\partial g(w,s)/\partial w_i$ and so on.

Using this notation and the integration by parts formula, we have for
$1\leq i,j, k\leq d$ that
%
%e4.12 #&#
\begin{eqnarray}
\label{36} g_{ij} (w,s) & =& - \frac{1}{2s} \int
_{\IR^d} h (\sqrt{1-s}w+\sqrt{s}z) \phi_{ij} (z)\,\mathrm{d}z\nonumber
\\[-8pt]\\[-8pt]
&=& \frac{1}{2\sqrt{s}} \int_{\IR^d} h_j(
\sqrt{1-s}w+\sqrt{s}z) \phi_i (z)\,\mathrm{d}z\nonumber
\end{eqnarray}
and
%
%e4.13 #&#
\begin{eqnarray}
\label{102} g_{ijk} (w,s) & =& \frac{\sqrt{1-s}}{2s^{3/2}} \int
_{\IR^d} h (\sqrt{1-s}w+\sqrt{s}z) \phi_{ijk} (z)\,\mathrm{d}z\nonumber
\\[-8pt]\\[-8pt]
&=& \frac{\sqrt{1-s}}{2\sqrt{s}} \int_{\IR^d} h_{jk}(
\sqrt{1-s}w+\sqrt{s}z) \phi_i (z)\,\mathrm{d}z.\nonumber
\end{eqnarray}

%le4.3 #&#
\begin{lemma}\label{lem5}
For each map $a\dvtx  \{1,\dots,d\}^k\rightarrow\IR$, we have
%
%e4.14 #&#
\begin{equation}
\label{104} \int_{\mrr} \Biggl( \sum
_{i_1,\dots, i_k=1}^d a(i_1,\dots, i_k)
\frac
{\phi
_{i_1\dots i_k}(z)}{\phi(z)} \Biggr)^2\phi(z)\,\mathrm{d}z \leq k! \sum
_{i_1,\dots, i_k=1}^d \bigl( a(i_1,\dots,
i_k) \bigr)^2.
\end{equation}
\end{lemma}

\begin{pf}
We will prove that
%
%e4.15 #&#
\begin{equation}
\label{101} \int_{\mrr} \frac{\phi_{i_1\cdots i_k}(z)}{\phi(z)}
\frac{\phi
_{j_1\cdots
j_k}(z)}{\phi(z)}\phi(z)\,\mathrm{d}z =\sum_{\pi}
\delta_{i_{\pi(1)} j_1}\cdots\delta_{i_{\pi(k)} j_k},
\end{equation}
where the summation is over all permutations of the set $\{1,\dots, k\}
$ and $\delta$ is the Kronecker delta. By \eq{101},
\begin{eqnarray*}
 \int_{\mrr} \Biggl( \sum_{i_1,\dots, i_k=1}^d
a(i_1,\dots, i_k)\frac{\phi
_{i_1\dots i_k}(z)}{\phi(z)}
\Biggr)^2\phi(z)\,\mathrm{d}z
&=&\sum_\pi\sum_{i_1,\dots, i_k=1}^d
a(i_1,\dots, i_k) a(i_{\pi
(1)},\dots,
i_{\pi(k)})
\\
&\leq& k! \sum_{i_1,\dots, i_k=1}^d \bigl(
a(i_1,\dots, i_k) \bigr)^2.
\end{eqnarray*}
To prove \eq{101}, we observe that
\begin{eqnarray*}
&& \int_{\mrr} \frac{\phi_{i_1\cdots i_k}(z)}{\phi(z)} \frac{\phi
_{j_1\cdots j_k}(z)}{\phi(z)}\phi(z)
\,\mathrm{d}z
\\
&&\quad=\frac{\partial^{2k}}{\partial x_{i_1}\cdots\partial x_{i_k}
\partial
y_{j_1}\cdots\partial y_{j_k}} \Big|_{x=y=0}\int_{\mrr}
\frac{\phi(z+x)}{\phi(z)}\frac{\phi
(z+y)}{\phi
(z)}\phi(z)\,\mathrm{d}z
\\
&&\quad=\frac{\partial^{2k}}{\partial w_1\cdots\partial w_{2k}} \Big|_{x=y=0} \mathrm{e}^{\langle x,y\rangle}
\end{eqnarray*}
where $w_s=x_{i_s}$ and $w_{s+k}=y_{j_s}$ for $s=1,2,\dots, k$.
By Fa\`a di Bruno's formula (see Hardy \cite{Hardy2006}), the latter
expression equals
\[
\sum_{P_1,\dots, P_m}\frac{\partial^{\llvert P_1\rrvert }\langle x,y\rangle}{\Pi_{s\in P_1}
\partial w_s} \dots
\frac{\partial^{\llvert P_m\rrvert }\langle x,y\rangle}{\Pi_{s\in P_m} \partial w_s} \Big|_{x=y=0},
\]
where the summation is over all unordered partitions of the set $\{
1,2,\dots, 2k\}$ and $\llvert \cdot\rrvert $ denotes the cardinality. However, the
summand is non-zero if and only if each $P_r$ is of the form $\{s,t+k\}
$ where $i_s=j_t$. This proves \eq{101}.
\end{pf}

\begin{pf*}{Proof of Theorem~\ref{thm1}}
Fix $A\in\mathcal{A}$ and $\eps>0$ (to be chosen
later) and let $f=f_{A,\eps}$ be the solution to the Stein
equation \eq{27}
with respect to $h=h_{A,\eps}$ as defined by \eq{28}. Let
\[
\kappa:= d_c\bigl(\law(W),\law(Z)\bigr).
\]
Adding and subtracting the corresponding terms, we have for
$g(w,s)=g_{A,\eps}(w,s)$ in \eq{103},
\begin{eqnarray*}
&&\E\bigl\{\Delta g(W,s)- W^t\nabla g(W,s)\bigr\}
\\
&&\quad= \E\bigl\{G^t \nabla g\bigl(W',s
\bigr)-G^t \nabla g(W,s)-W^t \nabla g(W,s)\bigr\}\\
&&\qquad{}+\sum_{i,j=1}^d \E\bigl\{(
\delta_{ij}- G_{i}D_{j})g_{ij}(W,s)
\bigr\}
\\
&&\qquad{}-\E\Biggl\{\sum_{i=1}^d
G_{i} g_i\bigl(W',s\bigr)-\sum
_{i=1}^d G_{i}g_i(W,s)-
\sum_{i,j=1}^d G_{i}D_{j}
g_{ij}(W,s)\Biggr\}
\\
&&\quad=: R_0(s) + R_1(s) - R_2(s).
\end{eqnarray*}
As $(W,W',G)$ is a Stein coupling, clearly $R_0(s)\equiv0$. Therefore,
by \eq{27},
\[
\E h(W)-\E h(Z)=\int_0^1
\bigl(R_1(s)-R_2(s)\bigr)\,\mathrm{d}s.
\]
To estimate $\int_0^1 R_1(s)\,\mathrm{d}s$, we consider the cases $\eps^2<
s\leq
1$ and $0< s\leq\eps^2$ separately.
For the first case, we use the first expression of $g_{ij}(w,s)$ in \eq
{36} and obtain
\begin{eqnarray*}
\int_{\eps^2}^1 R_1(s)\,\mathrm{d}s & =&\sum
_{i,j=1}^d \E\int_{\eps^2}
^1 \biggl(-\frac{1}{2s}\biggr) \int_{\mrr}
\bigl[\E ^W (\delta_{ij} -G_i
D_{j})\bigr]
\\
&&{}\times\bigl[h (\sqrt{1-s}W+\sqrt{s}z)-h (\sqrt{1-s}W)\bigr]
\phi_{ij}(z)\,\mathrm{d}z\,\mathrm{d}s,
\end{eqnarray*}
where we used the fact
that $\int_{\mrr} \phi_{ij}(z)\,\mathrm{d}z=0$.
By the Cauchy--Schwarz inequality, \eq{104} and \eq{106},
\begin{eqnarray*}
&&\sum_{i,j=1}^d \E\int_{\mrr}
\bigl[\E ^W (\delta_{ij} -G_i
D_{j})\bigr] \bigl[h (\sqrt{1-s}W+\sqrt{s}z)-h (\sqrt{1-s}W)\bigr]
\phi_{ij}(z)\,\mathrm{d}z
\\
&&\quad\leq \Biggl\{ \E\int_{\mrr} \Biggl( \sum
_{i,j=1}^d \bigl[ \E ^W (
\delta_{ij} -G_i D_{j})\bigr]
\frac{\phi_{ij}(z)}{\phi(z)} \Biggr)^2 \phi(z)\,\mathrm{d}z \Biggr\}^{1/2}
\\
&&\qquad{}\times \biggl\{ \int_{\mrr} \E\bigl[h (\sqrt {1-s}W+
\sqrt{s}z)-h (\sqrt{1-s}W)\bigr]^2 \phi(z)\,\mathrm{d}z \biggr\}^{1/2}
\\
&&\quad\leq\sqrt{2} B_2 \biggl\{ \int_{\mrr} \E\bigl[h
(\sqrt {1-s}W+\sqrt{s}z)-h (\sqrt{1-s}W)\bigr]^2 \phi(z)\,\mathrm{d}z \biggr
\}^{1/2}.
\end{eqnarray*}
From the definition of $\kappa$ and the concentration inequality of
the standard
$d$-dimensional Gaussian distribution (cf. \eq{35}), we have
%
%e4.16 #&#
\begin{eqnarray}
\label{105} &&\E\bigl\{h (\sqrt{1-s}W+\sqrt{s}z)-h (\sqrt{1-s} W)\bigr
\}^2\nonumber
\\
&&\quad\leq\E\bigl\{ \I\bigl[\dist\bigl(\sqrt{1-s}W, A^\eps \setminus A
\bigr)\leq \sqrt{s} \llvert z\rrvert \bigr]\bigr\}\nonumber\\[-8pt]\\[-8pt]
&&\quad\leq\P\bigl\{ \I\bigl[\dist\bigl(\sqrt{1-s}Z, A^\eps \setminus A
\bigr)\leq \sqrt{s} \llvert z\rrvert \bigr]\bigr\} +2d_c\bigl(
\law(W), \law(Z)\bigr)\nonumber
\\
&&\quad \leq4d^{1/4} \biggl(\frac{\eps}{\sqrt{1-s}} + 2\sqrt{\frac{s}{1-s}}
\llvert z\rrvert \biggr) +2\kappa.\nonumber
\end{eqnarray}
Using the Cauchy--Schwarz inequality, the bound \eq{32},
the simple inequality\break $\sqrt{a_1+a_2+a_3}\leq\sqrt{a_1}+\sqrt
{a_2}+\sqrt{a_3}$
for $a_1,a_2,a_3\ge
0$, and $\int_{\mrr} \llvert z\rrvert ^{1/2}\phi(z)\,\mathrm{d}z\leq d^{1/4}$, we have
\begin{eqnarray*}
\biggl\llvert \int_{\eps^2}^1 R_1(s)
\,\mathrm{d}s \biggr\rrvert
&\leq& C B_2 \int_{\eps^2} ^1
\frac{1}{s} \int_{\mrr} \biggl(d^{1/4}
\frac{\eps}{\sqrt{1-s}}+d^{1/4}\sqrt{\frac{s}{1-s}}\llvert z\rrvert +
\kappa\biggr)^{1/2}\phi(z)\,\mathrm{d}z\,\mathrm{d}s
\\
&\leq& C B_2 \bigl( d^{1/8} \eps^{1/2} \llvert \log
\eps\rrvert +d^{3/8} + \kappa^{1/2} \llvert \log\eps\rrvert
\bigr),
\end{eqnarray*}
where we used
$\int_{\epsilon^2}^1 \frac{1}{s(1-s)^{1/4}}\,\mathrm{d}s \leq C\llvert \log\epsilon\rrvert $
and $\int_{\epsilon^2}^1 \frac{1}{s^{3/4} (1-s)^{1/4}}\,\mathrm{d}s \leq C$.

For the case $0< s\leq\eps^2$, we use the second expression of
$g_{ij}(w,s)$ in \eq{36}, the Cauchy--Schwarz inequality, the bound \eq
{32} and \eq{104}, and obtain
\begin{eqnarray*}
&&\biggl\llvert \int_{0}^{\eps^2} R_1(s)
\,\mathrm{d}s \biggr\rrvert
\\
&&\quad=\Biggl\llvert \sum_{i,j=1}^d \E\int
_0^{\eps^2} \frac{1}{2\sqrt{s}} \int
_{\mrr} \bigl[\E^W (\delta_{ij}-G_i
D_{j})\bigr] h_j (\sqrt{1-s} W+\sqrt{s}z)
\phi_i(z)\,\mathrm{d}z\,\mathrm{d}s \Biggr\rrvert
\\
&&\quad\leq\eps\Biggl\llvert \E\int_{\mrr}\frac{2}{\eps}\sum
_{i=1}^d \Biggl\{ \sum
_{j=1}^d \bigl[\E^W(
\delta_{ij}-G_i D_j)\bigr]^2
\Biggr\}^{1/2}\frac{\phi
_i(z)}{\phi
(z)}\phi(z)\,\mathrm{d}z \Biggr\rrvert
\\
&&\quad\leq2\E \Biggl\{ \int_{\mrr} \Biggl\{\sum
_{i=1}^d \Biggl\{ \sum
_{j=1}^d \bigl[\E ^W(
\delta_{ij}-G_i D_j)\bigr]^2
\Biggr\}^{1/2} \frac{\phi_i(z)}{\phi(z)} \Biggr\} ^2 \phi(z)\,\mathrm{d}z
\Biggr\}^{1/2}
\\
&&\quad\leq2B_2,
\end{eqnarray*}
where the factor $\epsilon$ in the first inequality comes from $\int_0^{\epsilon^2}\frac{1}{2\sqrt{s}}\,\mathrm{d}s\leq\epsilon$.
Therefore,
\[
\biggl\llvert \int_0^1 R_1(s)\,\mathrm{d}s
\biggr\rrvert \leq C B_2 \bigl( d^{1/8} \eps^{1/2}
\llvert \log\eps\rrvert +d^{3/8} + \kappa^{1/2} \llvert \log\eps
\rrvert \bigr).
\]

In order to estimate $\int_0^1 R_2(s)\,\mathrm{d}s$, let $U$ and $V$ be
independent random
variables distributed uniformly on $[0,1]$.
Then
\[
R_2(s)=\E\sum_{i,j,k=1}^d U
G_i D_j D_k g_{ijk}(W+UVD,s).
\]
We again consider the cases $\eps^2< s\leq1$ and $0< s\leq\eps^2$
separately.

For the first case, we use the first expression of $g_{ijk}(w,s)$
in \eq
{102} and obtain
\begin{eqnarray*}
\int_{\eps^2}^1 R_2(s)\,\mathrm{d}s &=&\sum
_{i,j,k=1}^d \E\int_{\eps^2}^1
\frac{\sqrt{1-s}}{2s^{3/2}} \int_{\mrr} \bigl[h(\sqrt{1-s}W +\sqrt{s}z+
\sqrt{1-s} UVD)
\\
&&\hspace*{102pt}{}-h(\sqrt{1-s}W+\sqrt{s}z)\bigr] UG_i D_{j}
D_{k} \phi_{ijk} (z)\,\mathrm{d}z\,\mathrm{d}s
\\
&&{}+\sum_{i,j,k=1}^d \E\int
_{\eps^2}^1 \frac{\sqrt{1-s}}{2s^{3/2}} \int
_{\mrr} h(\sqrt{1-s}W+\sqrt{s}z)
\\
&&\hspace*{8pt}{}\times U\bigl[\E^W(G_{i}D_{j}D_{k})-
\E(G_{i}D_{j}D_{k})\bigr]\phi_{ijk}
(z)\,\mathrm{d}z\,\mathrm{d}s
\\
&&{}+\sum_{i,j,k=1}^d \E\int
_{\eps^2}^1 \frac{\sqrt{1-s}}{2s^{3/2}} \int
_{\mrr} \bigl[h(\sqrt{1-s}W+\sqrt{s}z)
\\
&&\hspace*{112pt}{} -h(\sqrt{1-s}Z+\sqrt{s}z)\bigr] U\E(G_{i}D_{j}D_{k})
\phi_{ijk}(z)\,\mathrm{d}z\,\mathrm{d}s
\\
&&{}+\sum_{i,j,k=1}^d \E\int
_{\eps}^1 U \E(G_i D_j
D_k) g_{ijk}(Z,s)\,\mathrm{d}z\,\mathrm{d}s
\\
&=:& R_{2,1,1}+R_{2,1,2}+R_{2,1,3}+R_{2,1,4},
\end{eqnarray*}
where $Z$ is an independent $d$-dimensional standard Gaussian random vector.
Now, it is straightforward to verify that for any $u,v,w,z\in\IR^d$
%
%e4.17 #&#
\begin{eqnarray}
\label{37}  \sum_{i,j,k=1}^d
u_i v_j w_k \phi_{ijk}(z)
= -u^t z v^tz w^tz \phi(z) +
\bigl(u^tv w^tz + u^tw v^tz +
v^tw u^tz\bigr)\phi(z).
\end{eqnarray}
In bounding $\int_{\eps^2}^1 R_2(s)\,\mathrm{d}s$, the integration with respect
to $s$ is bounded by
$\int_{\epsilon^2}^1 \frac{1}{s^{3/2}}\,\mathrm{d}s\leq C\epsilon^{-1}$.
From \eq{37} and the boundedness condition \eq{5},
\begin{eqnarray*}
\llvert R_{2,1,1}\rrvert &\leq&\E\int_{\eps^2}^1
\frac{\sqrt{1-s}}{2s^{3/2}} \int_{\mrr} \I\bigl[\dist\bigl(\sqrt{1-s}W+
\sqrt{s}z,A^\eps\setminus A\bigr)\leq \sqrt{1-s} \beta\bigr]
\\
&&{} \times U\Biggl\llvert \sum_{i,j,k=1}^d
G_i D_{j} D_{k}\phi _{ijk}(z)
\Biggr\rrvert \,\mathrm{d}z\,\mathrm{d}s
\\
&\leq&\alpha\int_{\eps^2}^1 \frac{\sqrt{1-s}}{4s^{3/2}}\int
_{\mrr} \E \bigl\{ \I \bigl[\dist\bigl(\sqrt{1-s}W+\sqrt{s}z,
A^\eps\setminus A\bigr)\leq \sqrt {1-s}\beta\bigr]
\\
&&\hspace*{82pt}{} \times \bigl[ \E\llvert D\rrvert ^2+ \bigl(
\E^W \llvert D\rrvert ^2-\E\llvert D\rrvert ^2
\bigr) \bigr] \bigr\} \bigl(3\llvert z\rrvert +\llvert z\rrvert ^3
\bigr)\phi(z)\,\mathrm{d}z\,\mathrm{d}s
\\
&\leq& C d^{3/2}\alpha\E\llvert D\rrvert ^2
\eps^{-1} \bigl(\kappa+d^{1/4} (\beta+\eps)
\bigr)+Cd^{3/2}\eps^{-1} \alpha B_1,
\end{eqnarray*}
where in the last inequality we used a similar recursive inequality
as \eq{105} as well as
$\int_{\mrr}\llvert z\rrvert ^3 \phi(z)\,\mathrm{d}z\leq d^{3/2}$.

From the Cauchy--Schwarz inequality and \eq{104},
\begin{eqnarray*}
\llvert R_{2,1,2}\rrvert \leq C\eps^{-1} B_3.
\end{eqnarray*}
From \eq{37} and a recursive inequality as \eq{105},
\begin{eqnarray*}
\llvert R_{2,1,3}\rrvert \leq C\bigl(\kappa\eps^{-1}+d^{1/4}
\bigr) \E\bigl(\llvert G\rrvert \llvert D\rrvert ^2\bigr).
\end{eqnarray*}
For $R_{2,1,4}$, observe that
\begin{eqnarray*}
\E g(Z+w,s)&=&-\frac{1}{2(1-s)}\int_{\mrr} \bigl[ \E h\bigl(
\sqrt {1-s}(Z+w)+\sqrt{s}z\bigr) -\E h(Z) \bigr]\phi(z)\,\mathrm{d}z
\\
&=&-\frac{1}{2(1-s)}\int_{\mrr} h(\sqrt{1-s}w+z)\phi(z)\,\mathrm{d}z+
\frac
{1}{2(1-s)}\E h(Z)
\\
&=&-\frac{1}{2(1-s)}\int_{\mrr} h(x)\phi(x-\sqrt{1-s}w)\,\mathrm{d}x+
\frac
{1}{2(1-s)}\E h(Z).
\end{eqnarray*}
Differentiating and evaluating at $w=0$, we obtain
\[
\E g_{ijk}(Z,s)=\frac{\sqrt{1-s}}{2}\int_{\mrr} h(x)
\phi_{ijk}(x)\,\mathrm{d}x.
\]
Now with
\eq{37},
\[
\llvert R_{2,1,4}\rrvert \leq C \E\bigl(\llvert G\rrvert \llvert D
\rrvert ^2\bigr).
\]

For the case $0<s\leq\eps^2$, we use the second expression of
$g_{ijk}$ in \eq{102}.
From \eq{33} and $\llvert \sum_{i=1}^d G_i \phi_i(z)\rrvert \leq\alpha\llvert z\rrvert \phi(z)$,
\begin{eqnarray*}
&&\biggl\llvert \int_0^{\eps^2} R_2(s)
\,\mathrm{d}s \biggr\rrvert
\\
&&\quad=\Biggl\llvert \sum_{i,j,k=1}^d \E\int
_0^{\eps^2} \frac{\sqrt{1-s}}{2s^{1/2}} \int
_{\mrr} h_{jk} (\sqrt{1-s}W +\sqrt{s}z+\sqrt{1-s} UVD
)
UG_i D_{j} D_{k}
\phi_i(z)\,\mathrm{d}z\,\mathrm{d}s \Biggr\rrvert
\\
&&\quad\leq\frac{8\alpha}{\eps^2} \E\int_0^{\eps^2}
\frac{\sqrt{1-s}}{2s^{1/2}} \int_{\mrr} \I\bigl[\dist\bigl(\sqrt{1-s}W +
\sqrt{s}z, A^\eps\setminus A\bigr)\leq\sqrt{1-s} \beta\bigr]
\\
&&\qquad {}\times \bigl[ \llvert D\rrvert ^2+ \bigl(\E^W
\llvert D\rrvert ^2-\E\llvert D\rrvert ^2 \bigr) \bigr]
\llvert z\rrvert \phi (z)\,\mathrm{d}z\,\mathrm{d}s
\\
&&\quad\leq Cd^{1/2}\alpha\bigl( \E\llvert D\rrvert ^2 \bigl(
\kappa\eps^{-1}+d^{1/4}\beta\eps^{-1}+d^{1/4}
\bigr)+ \eps^{-1} \sqrt{\Var\E^W \llvert D\rrvert
^2}\bigr),
\end{eqnarray*}
where in the last inequality we used a similar recursive inequality
as \eq{105} as well as
$\int_{\mrr}\llvert z\rrvert  \phi(z)\,\mathrm{d}z\leq d^{1/2}$ and $\int_{0}^{\epsilon
^2}\frac
{1}{2s^{1/2}}\,\mathrm{d}s\leq\epsilon$.

Therefore,
\begin{eqnarray*}
\biggl\llvert \int_0^1 R_2(s)\,\mathrm{d}s
\biggr\rrvert \leq C \bigl(d^{3/2}\alpha\E\llvert D\rrvert
^2 \eps^{-1} \bigl(\kappa +d^{1/4} (\beta+\eps)
\bigr)+ d^{3/2} \eps^{-1} \alpha B_1 +
\eps^{-1} B_3\bigr).
\end{eqnarray*}
Collecting the bounds and using the smoothing inequality \eq{34}, we
obtain the following recursive inequality
%
%e4.18 #&#
\begin{eqnarray}
\label{38} \kappa &\leq& C \bigl\{ d^{3/2}\alpha\E\llvert D\rrvert
^2 \eps^{-1} \bigl(\kappa +d^{1/4} (\beta+\eps)
\bigr)+d^{3/2} \eps^{-1} \alpha B_1\nonumber
\\[-8pt]\\[-8pt]
&&\hspace*{10pt}{}+ \eps^{-1} B_3 + B_2 \bigl(d^{3/8}
+d^{1/8} \eps^{1/2} \llvert \log\eps\rrvert + \sqrt{\kappa}
\llvert \log\eps\rrvert \bigr)\bigr\} +4d^{1/4} \eps.\nonumber
\end{eqnarray}
Let
\[
\eps= 2C d^{3/2} \alpha\E\llvert D\rrvert ^2+
\beta+d^{5/8} \alpha^{1/2} B_1^{1/2}
+d^{1/8} B_2 +d^{-1/8}B_3^{1/2}
\]
with the same constant $C$ as in \eq{38}. The theorem is proved by solving
the recursive inequality for $\kappa$ and observing that
as long as $\eps$ is smaller than an absolute constant, $\eps^{1/2}
\llvert \log\eps\rrvert  \leq C$ and $\kappa^{1/2} \llvert \log\eps\rrvert  \leq C d^{1/8}$,
the latter follows by solving the recursive inequality for $\kappa$ by
upper bounding $\sqrt{\kappa}$ in \eq{38} by $1$.
\end{pf*}

\begin{pf*}{Proof of Corollary~\ref{cor1}}
We apply Theorem~\ref{thm1} to the Stein coupling
\[
\bigl(\Sigma^{-1/2}W, \Sigma^{-1/2}W',
\Sigma^{-1/2}G\bigr).
\]
The first two terms in the bound \eq{7} are obtained by $\llvert \Sigma
^{-1/2}G\rrvert \leq s_2 \llvert G\rrvert $ and
$\llvert \Sigma^{-1/2}D\rrvert \leq s_2 \llvert D\rrvert $. For the last three terms, we first
observe that for a fixed $d\times d$ orthogonal matrix $U=(U_{1\cdot
},\dots, U_{d\cdot})^t$, a $d$-dimensional random vector $V$ and a
random variable $X$,
\begin{eqnarray*}
\sum_{i=1}^d \Var\E^W\bigl
\{(UV)_i X\bigr\}&=&\sum_{i=1}^d
\Var\E^W\bigl\{ U_{i\cdot}^t V X\bigr\}=\sum
_{i=1}^d \Var\bigl\{U_{i\cdot}^t
\E^W\{VX\}\bigr\}
\\
&=&\sum_{i=1}^d U_{i\cdot}^t
\Cov\bigl(\E^W \{VX\}\bigr)U_{i\cdot}=\Tr \bigl( U \Cov\bigl(
\E^W \{VX\}\bigr) U^t \bigr)
\\
&=&\Tr \bigl( \Cov\bigl(\E^W\{VX\}\bigr) \bigr)=\sum
_{i=1}^d \Var\bigl\{\E ^W
\{V_i X\}\bigr\}.
\end{eqnarray*}
Therefore, $B_1, B_2$ and $B_3$ remain unchanged if we replace $G$ and
$D$ by $UG$ and $UD$. Next, we write $\Sigma^{-1/2}=U\Lambda U^t$ where
$U$ is an orthogonal matrix and $\Lambda$ is a diagonal matrix whose
components are bounded by $s_2$ by definition. Finally, the last three
terms in the bound \eq{7} are obtained by
\begin{eqnarray*}
\sum_{i=1}^d \Var\E^W \bigl
\{\bigl(\Sigma^{-1/2}V\bigr)_i X\bigr\}
 &=&\sum
_{i=1}^d \Var\E^W \bigl\{\bigl(U\Lambda
U^t V\bigr)_i X\bigr\}
\\
&=&\sum_{i=1}^d \Var\E^W
\bigl\{\bigl(\Lambda U^t V\bigr)_i X\bigr\} 
\\
&\leq&
s_2^2 \sum_{i=1}^d
\Var\E^W \bigl\{\bigl(U^t V\bigr)_i X\bigr\}
\\
&=&s_2^2 \sum_{i=1}^d
\Var\E^W \{V_i X\}.
\end{eqnarray*}
\upqed\end{pf*}

\begin{pf*}{Sketch of the proof for Remark~\ref{rem2}}
Let $U$ and $V$ be
uniform on $[0,1]$, independent of each other and all else. Under the
conditions of Remark~\ref{rem2}, we have from Taylor expansion that
\begin{eqnarray*}
0 & =& \lambda^{-1}\IE\bigl\{f\bigl(W'\bigr)-f(W)\bigr\}
\\
& =& \lambda^{-1}\IE\sum_{i=1}^d
\bigl(W_i'-W_i\bigr) f_i(W)
+ \lambda^{-1}\IE\sum_{i,j=1}^d U
D_iD_j f_{ij} (W+UVD)
\\
& =& -\IE\sum_{i=1}^d W_i
f_i(W) + \IE\sum_{i,j=1}^d
G_iD_j f_{ij} (W)
\\
&&{} - 2 \IE\sum_{i,j=1}^d U
G_iD_j \bigl(f_{ij}(W)-f_{ij}
(W+UVD)\bigr).
\end{eqnarray*}
Therefore,
\begin{eqnarray*}
\E\bigl\{\Delta f(W)- W^t\nabla f(W)\bigr\}
&=& \sum_{i,j=1}^d \E\bigl\{(
\delta_{ij}- G_{i}D_{j})f_{ij}(W)\bigr
\}
\\
&&{}+2 \IE\sum_{i,j=1}^d U
G_iD_j \bigl(f_{ij}(W)-f_{ij}
(W+UVD)\bigr)\\
&=:& R'_1 - R'_2.
\end{eqnarray*}
The quantity $R'_1$ is the same as $\int_0^1 R_1(s)\,\mathrm{d}s$ in the proof of
Theorem~\ref{thm1}.
The quantity $R'_2$ contains an additional integration step as compared to
$\int_0^1 R_2(s)\,\mathrm{d}s$ of Theorem~\ref{thm1}, but can be bounded in very
much the same way (up to
different constants).
\end{pf*}

%s5 #&#
\section{Some Stein couplings} \label{sec5}

In this section, we describe some known coupling constructions as
multivariate Stein couplings for reference.

%s5.1 #&#
\subsection{Multivariate exchangeable pairs} \label{sec6}

Chatterjee and Meckes \cite{Chatterjee2008a} and Reinert and R{\"o}llin \cite{Reinert2009} introduced the exchangeable
pairs method for random vectors, which are instances of Stein
couplings. Assume
that
$(W, W')$ is an exchangeable pair of
$d$-dimensional random vectors such that

%e5.1 #&#
\begin{equation}
\label{39} \E ^W \bigl(W'-W\bigr) = -\Lambda W
\end{equation}
for some invertible $(d\times d)$-matrix $\Lambda$. It is straightforward to
check that
\[
\bigl(W,W',G\bigr) := \bigl(W,W',\tfrac{1}{2}
\Lambda^{-1}\bigl(W'-W\bigr) \bigr)
\]
is a Stein coupling.

Assume $\Var(W) = \Sigma$ is positive definite. Let $\Sigma^{1/2}$ be the
unique positive-definite root of $\Sigma$, and let $\Sigma^{-1/2}$ be its
corresponding unique inverse. It was shown by Reinert and R{\"o}llin \cite{Reinert2009} that
exchangeability of $(W,W')$ implies symmetry of $\^\Lambda =
\Sigma^{-1/2}\Lambda\Sigma^{1/2}$. Let therefore $O$ be an orthonormal matrix
and let $L$ be a positive diagonal matrix such that $\^\Lambda = OLO^t$.
Define $\^W = O^t\Sigma^{-1/2}W$, $\^W' = O^t\Sigma^{-1/2}W'$. It follows from
\eq{39} that
\[
\E^{\^W} \bigl(\^W'-\^W\bigr) =
-L\^W.
\]
We could therefore -- in principle -- restrict ourselves to $(W,W')$ that are
uncorrelated with \eq{39} being true for diagonal $\Lambda$. However, it is
 often much easier to work with the unstandardized $W$, as
$\Sigma^{-1/2}$ and $O$ are typically  difficult to calculate in practice.

%s5.2 #&#
\subsection{Multivariate size bias couplings} \label{sec7}

This coupling was considered by Goldstein and Rinott \cite{Goldstein1996}. Let $Y$ be a non-negative
$d$-dimensional random vector with mean $\mu$ and covariance matrix $\Sigma$.
For each $i=\{1,\ldots,d\}$, let $Y^i$ be defined on the same probability space
as $Y$ and have $Y$-size biased distribution in direction $i$, that is,
\[
\E\bigl\{Y_i f(Y)\bigr\}=\mu_i\E f
\bigl(Y^i\bigr)
\]
for all functions $f$ such that the expectations exist. Let $K$ be uniformly
distributed over $\{1,2,\ldots,d\}$, independent of all else, and let $e_K$ be the $d$-dimensional unit
vector in direction~$K$. Then
\[
\bigl(W,W',G\bigr) :=\bigl(Y-\mu,Y^{K}-\mu,d
\mu_{K}e_K\bigr)
\]
is a Stein coupling.

%s5.3 #&#
\subsection{Local dependence}\label{sec8}

A refined version of this dependence was considered by Rinott and Rotar{$'$} \cite{Rinott1996}. Let
$(X_i)_{i\in\mathcal{I}}$ be a collection of centered $d$-dimensional random
vectors for some finite index set $\mathcal{I}$. For each $i\in\mathcal{I}$,
assume there is a set $A_i\subset\mathcal{I}$ such that $X_i$ is independent of
$(X_j)_{j\in A^c_i}$. Let $I$ be uniformly distributed on~$\mathcal{I}$, independent of all else. Then
%e5.2 #&#
\begin{equation}
\label{112} \bigl(W,W',G\bigr) := \biggl(\sum
_{i\in\mathcal{I}} X_i, \sum_{i\in\mathcal{I}\setminus
A_I}
X_i, -nX_I\biggr)
\end{equation}
is a Stein coupling.

We have the following corollary of Theorem~\ref{thm1} for the Stein coupling~\eq{112}. The proof is straightforward and therefore omitted here.

%co5.1 #&#
\begin{corollary}\label{cor51}
Let
$(X_i)_{i\in\mathcal{I}}$ be a collection of centered $d$-dimensional random
vectors for some finite index set $\mathcal{I}$ with cardinality $n$. For each $i\in\mathcal{I}$,
assume there are sets $A_i\subset B_i\subset\mathcal{I}$ such that $X_i$ is independent of
$(X_j)_{j\in A^c_i}$ and $(X_j)_{j\in A_i}$ is independent of $(X_j)_{i\in B^c_i}$.
Assume further that
\[
\llvert X_i\rrvert \leq \beta,\qquad \bigl\llvert \bigl\{j\in
\mathcal{I}\dvtx  (A_j\cap B_i)\cup(A_i\cap
B_j) \ne \emptyset \bigr\}\bigr\rrvert \leq c_d,
\]
where $c_d$ is a constant only depending on $d$ and $\llvert \cdot\rrvert $ denotes cardinality. Then we have
\[
d_c\bigl(\law (W), \law (Z)\bigr)\leq C_d
\beta^3\, n
\]
for some constant $C_d$ only depending on $d$.
\end{corollary}

Under the condition of the above corollary, the result in Rinott and Rotar{$'$} \cite{Rinott1996} yields the bound
\[
d_c\bigl(\law (W), \law (Z)\bigr)\leq C_d
\beta^3\, n \log n,
\]
which has an additional logarithmic factor.

Note that if the summands are locally dependent, but highly uncorrelated, that is, if $\IE(X_iX_j^t)=0$ ``for many'' $j\in A_i$, it seems difficult to obtain informative bounds from
Rinott and Rotar{$'$} \cite{Rinott1996}. For example, if one tries to apply their Theorem~2.1 to dense random graphs in Section~\ref{sec31}, in order for $\chi_1$ in their~(2.2) to be small, $U_j$ in their Theorem~2.1 has to be of order $n^2$ (recall $n$ is the number of vertices in Theorem~\ref{thm2}), this makes $A_1$ in their Theorem~2.1 too large for their bound~(2.3) to converge to~$0$. In contrast, our Theorem~\ref{thm1} can yield informative bounds in such cases.

%\begin{appendix}
%\section{}
%\end{appendix}

% zodis "Acknowledgments" paliekamas pagal autoriu
\section*{Acknowledgments}
We are grateful to Louis Chen for fruitful discussions that lead to this
research and to Persi Diaconis for the suggestion to study the
joint distribution of inversions and descents in random permutations. We also thank the two anonymous referees for their detailed comments which led to many improvements, in particular the correction of an error in Lemma~\ref{lem3} in an earlier version of the manuscript.

X. Fang was supported by NUS Research Grants C-389-000-010-101 and C-389-000-012-101.
A. R\"ollin was partially supported by NUS Research Grants R-155-000-098-133 and R-155-000-124-112.

% imsref loaded by aiste.veprauskaite, 2014-07-01 16:42:37

\printhistory
\end{document}